\newcommand{\set}[1]{\{#1\}}
\newcommand{\pair}[1]{\langle#1\rangle}
\newcommand{\gami}{\text{\bf Gam}$_I$}
\newcommand{\gam}{\text{\bf Gam}}
\newtheorem{definition}{\bf Definition}[section]
\newtheorem{example}[definition]{\bf Example}
\newtheorem{corollary}[definition]{Corollary} 
\newtheorem{proposition}[definition]{Proposition} 
\newtheorem{theorem}[definition]{Theorem} 
\newtheorem{lemma}[definition]{Lemma} 
\title{A categorical representation of games}
\author{Fernando Tohm\'e and Ignacio Viglizzo}
\date{}
\begin{document}
	
	\maketitle
\begin{abstract}
    Strategic games admit a multi-graph representation, in which two kinds of relations, of accessibility and preferences, are used to describe how the players compare the possible outcomes. A category of games with a fixed set of players $\mathbf{Gam}_I$ is built from this representation, and a more general category $\mathbf{Gam}$ is defined with morphisms between representations with different sets of players,  both being {\it complete} and {\it cocomplete}.  The notion of Nash equilibrium can be generalized in this context. We then introduce two subcategories of $\mathbf{Gam}$, \textbf{NE} and $\mathbf{Gam}^{NE}$ in which the morphisms are equilibria-preserving. We illustrate the expressivity and usefulness of this framework with some examples.
\end{abstract}	

	\section{Introduction}
	Game Theory constitutes one of the main foundations of modern analyses of interaction among agents (human and otherwise). Large swaths of Economics and  Evolutionary Biology are game-theoretically characterized. 
    
 Category theory,  a branch of mathematics that provides a high-level, abstract framework for understanding mathematical structures and their relationships was developed in the mid-20th century by Samuel Eilenberg and Saunders Mac Lane \cite{maclane98categories}. It unifies and generalizes concepts across different fields of mathematics, such as algebra, topology, and logic. It studies objects and morphisms (arrows) between them, emphasizing structural relationships rather than specific details of the objects themselves. A modern introduction to the subject may be found in \cite{awodey10category}.
    
   Curiously enough, games have not been studied from a categorical point of view until recently. {A first approach, \cite{vassilakis92economic}, reformulates the classical result of \cite{mertens1985formulation} in terms of the existence of fixed points of functors. Another approach focused on the properties preserved by morphisms among games, can be found in \cite{vorobev1994foundations}}. Lapitsky defines in \cite{Lapitsky1999} game equilibrium notions as functors from a category in which games are the objects to the category of sets. \cite{Jimenez2014} introduces the notion of ``subgame'' (not to be confused with the standard notion of subgame) as the codomain of an endomonomorphism.  \cite{Serafimova2015} present a categorical treatment of how players learn the types of other players.  Unlike the aforementioned contributions, \cite{streufert18category} introduces a category of extensive games, based on their tree structure. More recently, \cite{Vannucci2024} presents a general categoric representation of games as categories of {\em Chu spaces}.
    
    The theory of {\em open games} (\cite{hedges2018morphisms} \cite{ghani2018compositional} \cite{frey2023composing}) pinpoints on one of the main obstacles for a categorical treatment of games. Namely, compositionality is hard to achieve without retrofitting the notion of game with an approach based on {\em lenses} and other concepts of the recently developed field of categorical {\em optics} \cite{hedges2017coherence} \cite{capucci2021towards}.

	Our goal here differs from {this prevailing approach to games in the literature on applied category theory}. Instead of only focusing on the composition of games as a way of building up new ones, we abstract away payoffs and actions representing them through binary relations on the sets of possible outcomes of the game, {a standpoint close to the {\em order graphs} introduced in \cite{goforth2004topology}}. Then each game becomes a multigraph with relations indexed by the set of players.  The vertices correspond to the outcomes, and for each player, we consider two sorts of edges, one representing the preferences, and another indicating which outcomes are effectively reachable through the actions (accessibility relation).  While actions are not explicitly included in the model, this presentation provides a more flexible and expressive framework for some aspects of game theory.

 	We think that our representation of strategic games is at the same time close enough to the one commonly used by game theory practitioners, yet flexible enough to allow us to build the basic category-theoretic constructions. The key for the categorical constructions in this paper are the properties of the accessibility and preference relations. These relations suffice to define Nash equilibria and thus facilitate the characterization of a category in which morphisms among their objects preserve the equilibria.

	In the next section, we introduce the representation of strategic games we will use in the paper. In section~\ref{cat} we introduce two categories in which these representations of games are the objects, one with a fixed set of players, \gami, and another including games with different sets of players, \gam. We prove that both these categories are complete, cocomplete, and have exponentials. In section \ref{equilibria} we define subgames and generalize the notion of Nash equilibrium arising from the interaction between the preference and accessibility relations. 
 
 Finally, in section \ref{sectNash} we investigate two subcategories of \gam\  in which the morphisms preserve Nash equilibria.

	\section{Multi-Graph Representation of Strategic Games}
	
	We are going to consider a category whose objects model some aspects of  {\em strategic games} (\cite{osborne94course}):
	\begin{definition}\label{definitionStartegicGame}
		A  strategic game is a structure of the form $G = \langle I, \{A_i\}_{i \in I}, \{\pi_i\}_{i \in I} \rangle$  where $I=\set{1,\ldots, n}$ is a set of {\em players} and $A_i, i\in I$ is a finite set of {\em strategies} for each player, and  $\pi_i : \prod_{i \in I} A_i \rightarrow \mathbb{R} $ is player $i$'s payoff.
	\end{definition}

 A critical component in defining a category is specifying the class of {\em morphisms} among its objects. For the category of games, this involves developing a representation of {\em games} that addresses certain limitations of more direct approaches. Specifically, a direct approach led to overly rigid categories, where the only morphisms were inclusions and isomorphisms. To overcome this, our representation replaces each profile $a = (a_1, \ldots, a_n) \in \prod_{i \in I} A_i$ with the outcome $o$ that results when all players play their actions in $a$. This representation also models two essential features of games: the ability of {\em individual} agents to influence outcomes and their preferences over them.

This alternative representation of a game can be seen as defining a {\em multigraph}. That is, as a set of nodes with multiple edges between them (including loops at the nodes \cite{Tutte98gt}). Each node corresponds to an outcome of the game and between any pair of nodes there can exist, for each player, two types of edges. One type is undirected, corresponding to the possibility that the player to which it belongs may change from one outcome to the other. The second type is directed and represents that one outcome is preferred by the player over the other.

More formally, and similarly to what is observed in \cite{schipper02undergraduates}, we have:
	
	\begin{definition}\label{definitionGame}
		A {\em game} is $G = \langle I, O, \{R_i\}_{i \in I}, \{\preceq_i\}_{i \in I} \rangle$, where $I$ is a set of {\em players} and $O$ is a  set of {\em outcomes}.
		For each $i \in I$, $R_i$ is a reflexive and symmetric binary {\em accessibility} relation, a subset of $O \times O$. In turn, $\preceq _i\subseteq O\times O$ is a preorder (this is, a reflexive and transitive relation) representing player  $i$'s {\em preferences} over outcomes.
	\end{definition}

	To see how games defined in this way have some of the same properties as those in the more standard approach in Game Theory above, take as set $O$ of outcomes the list of the combined actions of all the players, that is, the {\em profiles} of actions $(a_1, \ldots, a_i, \ldots, a_n)$ in  $\prod_{i \in I} A_i=O$.  Then, given any outcome $o = (a_1, \ldots, a_i, \ldots, a_n)$, a unilateral change of the action chosen by $i$ (while all other players stay put), say from $a_i$ to $a'_i$, yields $o' = (a_1, \ldots, a'_i, \ldots, a_n)$. We can understand this as indicating that the pair $\langle o, o'\rangle$ belongs to a relation of accessibility $R_i$, indicating how unilateral choices of a player $i$ can lead from an outcome to other ones. In this way, $R_i$ captures the result of exerting the actions of $A_i$. 
	
	We assume that the relations $R_i$ are reflexive, i.e. $\langle o, o\rangle \in R_i$ for every $i\in I$ and $o\in O$. This can be justified by considering the case of strategic games: if $o = (\ldots, a_i, \ldots)$ and $i$ does not change her choice $a_i$, the result is again $o$. Analogously, we assume that $R_i$ is symmetric. In the context of strategic games, this is justified by the fact that from $o = (a_1, \ldots, a_i, \ldots, a_n)$, a unilateral change from $a_i$ to $a'_i$ yields $o' = (a_1, \ldots, a'_i, \ldots, a_n)$ just as from $o'$, a unilateral change from $a'_i$ to $a_i$ yields $o$. That is, if $\langle o, o' \rangle \in R_i$ then $\langle o', o \rangle \in R_i$. We will also use freely the infix notation for the relations $R_i$, writing $oR_io'$ instead of $\pair{o,o'}\in R_i$.
    
    We can illustrate the characterization of accessibility relations in a strategic game using the following example.
	
	\begin{example}~\label{PD}
		Let us consider the well-known Prisoner's Dilemma  (PD) as a strategic game given by the matrix:
		
		\begin{center}
			\begin{tabular}{l|c|c}
				&   C   &   D   \\ \hline
				C & (-1,-1) & (-3,0) \\ \hline
				D & (0,-3) & (-2,-2)
			\end{tabular}
		\end{center}	
		
		In this game ${PD}$, $I=\set{1,2}$, where 1 chooses rows and 2 columns.  Here $O_{PD}=\set{(C,C),(C,D),$ $(D, C),(D,D)}$, where $C$ stands for `Cooperate' and $D$ stands for `Defect'    .  Figure 1 presents the corresponding multi-graph representation of this game. We can see, for instance, that player 1 can change  unilaterally from $D$ to $C$, so $(D,C)$ is accessible from $(C,C)$ and vice-versa,  and the same for  $(D,D)$ and  $(C,D)$. The preferences are such that for instance $(C,C) \preceq_1 (C,D)$ since $\pi_1(C,C) = -1$ and $\pi_1(C,D) =0$.

	\begin{figure}[H]
	\centering
	
	\begin{tikzpicture}[scale=1,
	DashedR/.style={
		dashed,
		->,>=stealth',
		shorten >=1pt,
		auto,
		thick,
		draw=red!60},
	DashedB/.style={
		dashed,
		->,>=stealth',
		shorten >=1pt,
		auto,
 		thick,
		draw=blue!60},
	FillR/.style={
		thick,
		draw=red!60},
	FillB/.style={
		thick,
		draw=blue!60}]
	
	\tikzstyle{every state}=[fill=gray!30!white,draw=none]
	
	\coordinate (A) at (0,0);
	\coordinate (B) at (4,0);
	\coordinate (C) at (4,4);
	\coordinate (D) at (0,4);
	
	\node[state] (N1) at (A) {DC};
	\node[state] (N2) at (B) {DD};
	\node[state] (N3) at (C) {CD};
	\node[state] (N4) at (D) {CC};
	
	\path (N1) edge [FillR] (N2)
	(N1) edge [DashedR, bend left = 15] (N2)
	(N2) edge [DashedB, bend left = 15] (N1)
	(N2) edge [FillB] (N3)
	(N2) edge [DashedR, bend left = 15] (N3)
	(N3) edge [DashedB, bend left = 15] (N2)
	(N1) edge [FillB] (N4)
	(N1) edge [DashedR, bend left = 15] (N4)
	(N4) edge [DashedB, bend left = 15] (N1)
	(N4) edge [FillR] (N3)
	(N4) edge [DashedR, bend left = 15] (N3)
	(N3) edge [DashedB, bend left = 15] (N4)
	
	(N1) edge [DashedR, bend right = 8] (N3)
	(N3) edge [DashedB, bend right = 8] (N1)
	
	(N2) edge [DashedB, bend left = 8] (N4)
	(N2) edge [DashedR, bend right = 8] (N4);
	
	\draw[DashedB](N1) to [out=195,in=165,looseness=8] (N1);
	\draw[FillB](N1) to [out=240,in=210,looseness=8] (N1);
	\draw[FillR](N1) to [out=285,in=255,looseness=8] (N1);
	\draw[DashedR](N1) to [out=330,in=300,looseness=8] (N1);
	
	\draw[DashedB](N2) to [out=240,in=210,looseness=8] (N2);
	\draw[FillR](N2) to [out=285,in=255,looseness=8] (N2);
	\draw[DashedR](N2) to [out=330,in=300,looseness=8] (N2);
	\draw[FillB](N2) to [out=375,in=345,looseness=8] (N2);
	
	\draw[DashedR](N3) to [out=60,in=30,looseness=8] (N3);
	\draw[FillR](N3) to [out=105,in=75,looseness=8] (N3);
	\draw[DashedB](N3) to [out=150,in=120,looseness=8] (N3);
	\draw[FillB](N3) to [out=375,in=345,looseness=8] (N3);
	
	\draw[FillB](N4) to [out=60,in=30,looseness=8] (N4);
	\draw[FillR](N4) to [out=105,in=75,looseness=8] (N4);
	\draw[DashedB](N4) to [out=150,in=120,looseness=8] (N4);
	\draw[DashedR] (N4) to [out=195,in=165,looseness=8] (N4);
	
	\end{tikzpicture}
	
	\caption{Multi-graph representing the Prisoner's Dilemma in Example~~\ref{PD}.
	Blue and red lines correspond to players 1 and 2, respectively. Full (undirected) lines correspond to accessibility relations while dashed ones (directed) represent preferences.}
	\label{figPD}
\end{figure}
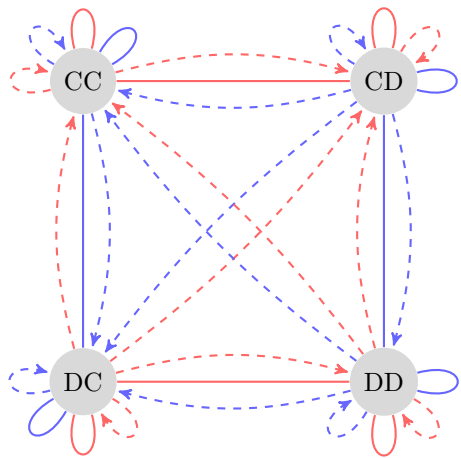

			\end{example}

	Notice that in this standard interpretation of the game, each $R_i$ is transitive. Lifting this requirement allows us to model other situations:
	
	\begin{example}~\label{off}
		Consider the following strategic game, $G_1$:
		
		\begin{center}
			\begin{tabular}{l|c|c}
				&   L   &   R   \\ \hline
				T & (0,0) & (-1,2) \\ \hline
				D & (2,-1) & (0,0)
			\end{tabular}
		\end{center}	
		
		\noindent where again player 1 chooses rows and 2 columns. In this game, the outcomes $(T,L)$ and $(D,R)$ both lead to the same payoff of $0$ for both players. Figure~\ref{figg1} depicts the corresponding multi-graph representation.

         \begin{figure}[H]
	\centering
	
	\begin{tikzpicture}[scale=1,
	DashedR/.style={
		dashed,
		->,>=stealth',
		shorten >=1pt,
		auto,
		thick,
		draw=red!60},
	DashedB/.style={
		dashed,
		->,>=stealth',
		shorten >=1pt,
		auto,
 		thick,
		draw=blue!60},
	FillR/.style={
		thick,
		draw=red!60},
	FillB/.style={
		thick,
		draw=blue!60}]
	
	\tikzstyle{every state}=[fill=gray!30!white,draw=none]
	
	\coordinate (A) at (0,0);
	\coordinate (B) at (4,0);
	\coordinate (C) at (4,4);
	\coordinate (D) at (0,4);
	
	\node[state] (N1) at (A) {DL};
	\node[state] (N2) at (B) {DR};
	\node[state] (N3) at (C) {TR};
	\node[state] (N4) at (D) {TL};
	
	\path (N1) edge [FillR] (N2)
	(N1) edge [DashedR, bend left = 15] (N2)
	(N2) edge [DashedB, bend left = 15] (N1)
	(N2) edge [FillB] (N3)
	(N2) edge [DashedR, bend left = 15] (N3)
	(N2) edge [DashedB, bend right = 15] (N3)
	(N1) edge [FillB] (N4)
	(N1) edge [DashedR, bend left = 15] (N4)
	(N4) edge [DashedB, bend left = 15] (N1)
	(N4) edge [FillR] (N3)
	(N4) edge [DashedR, bend left = 15] (N3)
	(N3) edge [DashedB, bend left = 15] (N4)
	
	(N1) edge [DashedR, bend right = 8] (N3)
	(N3) edge [DashedB, bend right = 8] (N1)
	
	(N2) edge [DashedB, bend left = 8] (N4)
    (N4) edge [DashedB, bend right = 8] (N2)
	(N2) edge [DashedR, bend right = 8] (N4)
    (N4) edge [DashedR, bend left = 8] (N2);
	
	\draw[DashedB](N1) to [out=195,in=165,looseness=8] (N1);
	\draw[FillB](N1) to [out=240,in=210,looseness=8] (N1);
	\draw[FillR](N1) to [out=285,in=255,looseness=8] (N1);
	\draw[DashedR](N1) to [out=330,in=300,looseness=8] (N1);
	
	\draw[DashedB](N2) to [out=240,in=210,looseness=8] (N2);
	\draw[FillR](N2) to [out=285,in=255,looseness=8] (N2);
	\draw[DashedR](N2) to [out=330,in=300,looseness=8] (N2);
	\draw[FillB](N2) to [out=375,in=345,looseness=8] (N2);
	
	\draw[DashedR](N3) to [out=60,in=30,looseness=8] (N3);
	\draw[FillR](N3) to [out=105,in=75,looseness=8] (N3);
	\draw[DashedB](N3) to [out=150,in=120,looseness=8] (N3);
	\draw[FillB](N3) to [out=375,in=345,looseness=8] (N3);
	
	\draw[FillB](N4) to [out=60,in=30,looseness=8] (N4);
	\draw[FillR](N4) to [out=105,in=75,looseness=8] (N4);
	\draw[DashedB](N4) to [out=150,in=120,looseness=8] (N4);
	\draw[DashedR] (N4) to [out=195,in=165,looseness=8] (N4);
	
	\end{tikzpicture}
	
	\caption{Multi-graph representation of  $G_1$ from Example~~\ref{off}.
	Blue and red lines correspond to players 1 and 2, respectively. Full (undirected) lines correspond to accessibility relations while dashed ones (directed) represent preferences.}
	\label{figg1}
\end{figure}

       If, instead, the players identify $(T,L)$ and $(D,R)$ we could represent this result as a single outcome $o$. Notice that such identification amounts to forgetting how each outcome may have been reached, and thus the accessibility relations are conflated. Figure~\ref{figNT2} represents this game.
		
		\begin{figure}[H]
			\centering
			
			\begin{tikzpicture}[scale=1,
			DashedR/.style={
				dashed,
				->,>=stealth',
				shorten >=1pt,
				auto,
				thick,
				draw=red!60},
			DashedB/.style={
				dashed,
				->,>=stealth',
				shorten >=1pt,
				auto,
				thick,
				draw=blue!60},
			FillR/.style={
				thick,
				draw=red!60},
			FillB/.style={
				thick,
				draw=blue!60}]
			
			\tikzstyle{every state}=[fill=gray!30!white,draw=none]
			
			\coordinate (A) at (0,0);
			\coordinate (C) at (4,4);
			\coordinate (D) at (0,4);
			
			\node[state] (N1) at (A) {DL};
			\node[state] (N3) at (C) {TR};
			\node[state] (N4) at (D) {o};
			
			\path
			(N1) edge [FillB, bend left = 5] (N4)
			(N4) edge [FillR, bend left = 5] (N1)
			(N1) edge [DashedR, bend left = 15] (N4)
			(N4) edge [DashedB, bend left = 15] (N1)
			
			(N3) edge [FillB, bend left = 5] (N4)
			(N4) edge [FillR, bend left = 5] (N3)
			(N4) edge [DashedR, bend left = 15] (N3)
			(N3) edge [DashedB, bend left = 15] (N4)
			
			(N1) edge [DashedR, bend right = 8] (N3)
			(N3) edge [DashedB, bend right = 8] (N1);
			
			\draw[DashedB](N1) to [out=195,in=165,looseness=8] (N1);
			\draw[FillB](N1) to [out=240,in=210,looseness=8] (N1);
			\draw[FillR](N1) to [out=285,in=255,looseness=8] (N1);
			\draw[DashedR](N1) to [out=330,in=300,looseness=8] (N1);
			
			\draw[DashedR](N3) to [out=60,in=30,looseness=8] (N3);
			\draw[FillR](N3) to [out=105,in=75,looseness=8] (N3);
			\draw[DashedB](N3) to [out=150,in=120,looseness=8] (N3);
			\draw[FillB](N3) to [out=375,in=345,looseness=8] (N3);
			
			\draw[FillB](N4) to [out=60,in=30,looseness=8] (N4);
			\draw[FillR](N4) to [out=105,in=75,looseness=8] (N4);
			\draw[DashedB](N4) to [out=150,in=120,looseness=8] (N4);
			\draw[DashedR] (N4) to [out=195,in=165,looseness=8] (N4);
			
			\end{tikzpicture}
			
			\caption{Multi-graph representing the game $G_1$ with set of outcomes $O_1=\set{o,DL,TR}$ from Example~~\ref{off}, in which the accessibility relation is not transitive. Blue and red lines correspond to players 1 and 2, respectively. Full lines correspond to accessibility relations and dashed ones to preferences.}
			\label{figNT2}
		\end{figure}
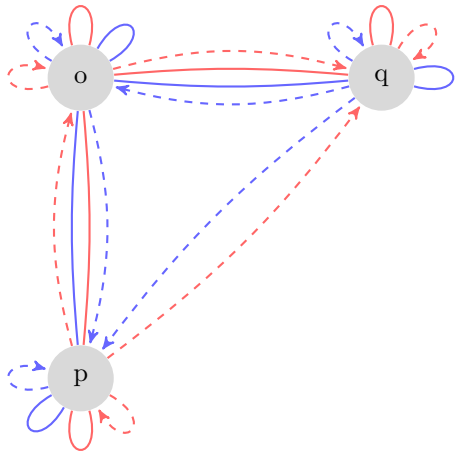
	\end{example}

As said, the representation of games as multigraphs is intended to facilitate the specification of a category of games. This is based on the properties of the two types of edges or relations among outcomes.

 For each player $i$, the pair $\pair{O,R_i}$ can be regarded as an object in the category  $\bf EndoRel$, which is the category of sets endowed with a binary relation on them (an \textit{endorelation}), with morphisms the set functions that preserve the relation. That is, $f:\pair{A,R}\to \pair{B,S}$ is a morphism in $\bf EndoRel$ if for every $a,a'\in A$, if $\pair{a,a'}\in R$ then $\pair{f(a),f(a')}\in S$.  We have chosen for our representation to be a bit more specific, asking that the binary relations are reflexive and symmetric. This can also be interpreted as a category of \textit{undirected graphs}.

On the other  hand, the pairs $\pair{O,\preceq_i}$ can be seen as objects in  $\bf PreOrd$, the category of  preordered sets with  monotone functions as morphisms.

 This new representation, while an alternative to strategic games, effectively captures diverse kinds of games, as shown in the following examples:

    \begin{example}
    {Any extensive-form game has the form $G = \langle I, \mathcal{H}, P, A, \{\mathcal{I}_i\}_{i \in I}, \{\pi_i\}_{i \in I}\rangle$, where $I$ is the set of players, $\mathcal{H}$ the set of {\em histories} in the game, $P$ a function that to each non-terminal history $h \in \mathcal{H}$ assigns a player $P(h) \in I$ and a set of possible actions that this player can choose, $A(h)$. Each $\mathcal{I}_i$ is a partition of $\{h \in \mathcal{H} : P(h)=i\}$. Finally, $\pi_i$ is a real-valued function defined over {\em terminal} histories in $\mathcal{H}$ \cite{osborne94course}. Each extensive-form game $G$ can be translated into a strategic-form game $G^{'}$ in which each $a_i \in A_i$ is a function that assigns to each part $I_i \in \mathcal{I}_i$ an action that can be exerted at the end of a history $h \in A_i$ (such $a_i$ is called a {\em strategy}). The payoff function is $\hat{\pi}_i$, which assigns to each $a \in \prod_{i \in I} a_i$ (which corresponds to a single terminal history $h$) the value $\pi_i(h)$. $G^{'}$ can be naturally translated into a corresponding multi-graph version according to Definition~\ref{definitionGame}.}
    \end{example}
   
   \begin{example}
       
  Given any strategic game $G = \langle I, \{A_i\}_{i \in I}, \{\pi_i\}_{i \in I} \rangle$ can be extended to a {\em mixed-extension} game $G^e =  \langle I, \{\Delta A_i\}_{i \in I}, \{E\pi_i\}_{i \in I} \rangle$, where each $\sigma_i \in \Delta A_i$ is $\sigma_i: A_i \rightarrow [0,1]$ such that $\sum_{a \in A_i} \sigma_i(a) = 1$, i.e. a probability distribution over $A_i$ with countable support. The payoff function is an {\em expected utility} $E\pi_i(\sigma_1, \ldots, \sigma_{|I|}) = \sum_{a \in \prod_{i \in I} A_i} \sigma_1(a_1) \ldots \sigma_{|I|}(a_{|I|}) \pi_i (a_1, \ldots, a_{|I|}) $. In the multi-graph corresponding to $G^e$ the set of outcomes is
$$\Delta O = \{\sigma \in \prod_{i \in I}\Delta A_i\}$$
 where for each $\ i \in I$, the accessibility relation $R^{\Delta O}_i$, can be defined as follows. Given $\sigma, \tau \in \Delta O$,
$$  \sigma R^{\Delta O}_i \tau \ \mbox{iff} \ \sigma_{j} = \tau_{j} \text{ for every } j\neq i.$$
That is, $\sigma$ and $\tau$ can be accessed from each other by $i$ if they can differ only in $i$'s mixed strategies over $A_i$. It follows immediately that $R^{\Delta O}_i$ is reflexive and symmetric.
With respect to the preferences, for each $i \in I$, given $\sigma, \tau \in \Delta O$,
$$ \sigma \preceq_i \tau 
 \ \mbox{iff} \ E\pi_i(\sigma) \leq  E\pi_i(\tau)$$
It is immediate that for each $i \in I$, $\preceq_i$ satisfies reflexivity and transitivity.
 \end{example}

   \begin{example}
   Each incomplete information game $G^{\mu}$ adds to definition~\ref{definitionStartegicGame} two elements, namely a set $\Omega$ of {\em states} (profiles of {\em types} of the players) and a map $\mu$ that assigns to each  state $\omega \in \Omega$ and each player $i \in I$ a set of actions $A^{\omega}_i \subset A_i$ and a payoff function $\pi^{\omega}_i$ for $i$ in state $\omega$. This assignment has an associated probability $\hat{\mu}(\omega, i)$. $G^{\mu}$ can be reframed as a {\em mixed-extension} game\cite{harsanyi67games1} (see also \cite{pivato2024categorical}).
  \end{example}
  
In general, a larger class of interactive decision-making problems can represented by multi-graphs, since preferences, being morphisms in {\bf PreOrd}, allow for incomparabilities among outcomes, unlike the usual representation based on payoff functions, which impose a linear order on outcomes.

In the next section we define different categories in which the objects are multi-graph versions of strategic games and the morphisms preserve the accessibility and the preference relations, acting as morphisms in {\bf EndoRel} and {\bf PreOrd}, respectively.
		
	\section{Categories of Multi-Graph Representation of Games}~\label{SectGamI} \label{cat}

\subsection{Games with a fixed set of players}

As a first approach to the categorification of the class of multi-graph representations of strategic games presented in Definition~\ref{definitionGame}, we consider the subclass of them in which the set of players is the same set $I$ and define the category $\mathbf{Gam}_I$:

\begin{definition}\label{definitionGamI}
	Each object in the category \gami \ is  $G = \langle O, \{R_i\}_{i \in I}, \{\preceq _i\}_{i \in I} \rangle$ as in Definition \ref{definitionGame}, and if $G' = \langle O', \{R'_i\}_{i \in I}, \{\preceq' _i\}_{i \in I} \rangle$ is also an object, a morphism  $f:G\to G'$  is a function $f:O\to O'$ such that for all $i\in I$, $f$ preserves $R_i$ and $\preceq_i$, that is, for all $o,p\in O$:
	\[oR_ip \text{ implies }  f(o)R_i'f(p), \] 
	and
	\[o\preceq_ip\text{ implies } f(o)\preceq'_if(p).\]
\end{definition}

That is, $f$ is a morphism in \gami \  if and only if for each $i\in I$, $f:\pair{O,R_i}\to\pair{O',R'_{i}}$ and $f:\pair{O,\preceq_i}\to\pair{O',\preceq'_{i}}$ are {\bf EndoRel} and {\bf PreOrd} morphisms, respectively. 

In more intuitive terms, a morphism from $G$ to $G'$ maps the outcomes of $G$  on those of $G'$, preserving the preferences of all the players, such that for every alternative option for any player in $G$  is mapped to a viable one for the player in $G'$.

\begin{example} \label{example morphisms}The games ${PD}$ from Example~\ref{PD} and $G_1$ of Example \ref{off} (in the version depicted in Figure~\ref{figNT2}) are objects in the category $\mathbf{Gam}_{\{1,2\}}$.  
A morphism $g:PD\to G_1$ is given by $g(C,C) = o = g(D,D)$, $g(D,C) = (D,L)$ and $g(C,D) = (T,R)$. We can easily check that $g$ preserves $R_1, R_2, \preceq_1$, and $\preceq_2$.
\end{example}

It is clear that the composition of morphisms is a  morphism, and that the identity functions over the sets of outcomes are morphisms. Also, the composition is associative, since it is just the composition of set functions preserving relations. Thus we have a category \gami \  of games with a fixed set of players $I$.

One of the advantages of the categorical approach is that we can identify (up to isomorphism) the objects with desirable universal properties in \gami. These objects can be obtained as in the categories $\bf EndoRel$  and $\bf PreOrd$. For those categories, the proofs are worked out in  \cite{viglizzo23basic},
and can be easily adapted to the case we deal with here of symmetric and reflexive relations.

 As a first example we have: 
\begin{itemize}
    \item \textbf{Binary products}: given games $G = \langle O, \{R_i\}_{i \in I}, \{\preceq _i\}_{i \in I} \rangle$ and $G' = \langle O', \{R'_i\}_{i \in I}, \{\preceq '_i\}_{i \in I} \rangle$, a product in the category \gami\  is a game $G\times_I G'$ such that there exist projection morphisms $\pi_1:G\times_I G'\to G$ and $\pi_2:G\times_I G'\to G'$ and for any game $X$ with morphisms $f:G\times_I G'\to G$ and $g:G\times_I G'\to G'$, there exist a \textit{unique} morphism $u:X\to G\times_I G'$ such that $\pi_1\circ u=f$ and $\pi_2\circ u=g$:\[\xymatrix{
		&X\ar[dl]_f\ar[dr]^g\ar@{-->}^u[d]&\\
G&G\times_IG'\ar[l]_{\pi_1}\ar[r]^{\pi_2}&G' }\]
 We say $G\times_I G'$ is \textit{a} product because there may be other objects in the category with the same property, but it is a standard result that they all must be isomorphic to each other. The notation ``$\times_I$'' uses the letter I as a subscript to make it clear that this is the product in the category \gami.
    
    Such a product can be constructed defining:
	\[G\times_I G'  = \langle  O\times O', \{R^{\times_I}_{{i}}\}_{i \in I}, \{\preceq^{\times_I}_{i}\}_{i \in I} \rangle\]
	where 
	\[ \pair{o,o'}R^{\times_I}_{i} \pair{p,p'}  \text{ iff }  oR_i p   \text{ and } o' R'_ip'\]
	\noindent and
	\[\pair{o,o'}\preceq^{\times_I}_i\pair{p,p'} \text{ iff } o\preceq_i p\text{ and } o'\preceq'_i p'.\]
	$R^{\times_I}_i$ is reflexive and symmetric, since  both $R_i$ and $R'_i$ are reflexive and symmetric relations. By the same token, $\preceq_i^{\times_I}$ is a reflexive and transitive relation.
	
	The natural projection functions $\pi_1:O\times O'\to O$ and $\pi_2:O\times O'\to O'$ provide the projection morphisms. For instance, for each $i\in I$,  if $\pair{o,o'}R^{\times_I}_{i}\pair{p,p'}$ then by definition ,$oR_i p$ that is, $\pi_1\pair{o,o'}R_i \pi_1\pair{p,p'}$. The function $u$ is defined by $u(o)=\pair{f(o),g(o)}$ for each outcome $o$ in the game $X$, and it gives the corresponding game morphism $u$.

The product $G\times_IG'$ can be understood as a game in which the players in $I$ are playing simultaneously the games $G$ and $G'$. 

    \item  \textbf{Small products}:  small products are the generalization of binary products to arbitrary families of games, as long as we only consider a \textit{set} of   games. Let $S$ be a set and $\mathcal{G}=\set{G_s}_{s\in S}$ be such a family, with $G_s= \langle O_s, \{R_{i,s}\}_{i \in I}, \{\preceq _{i,s}\}_{i \in I} \rangle$. One way to obtain a small product is to consider 
	\[\prod_{s\in S} G_s  = \langle  \prod_{s\in S} O_s, \{R^{\Pi_I}_{{i}}\}_{i \in I}, \{\preceq^{\Pi_I}_{i}\}_{i \in I} \rangle,\]
	where, if we write the elements of $\prod_{s\in S} O_s$ as $\bar{o}$, with $\bar{o}_s\in O_s$ for every $s\in S$, then 
     \[ \bar{o}R^{\Pi_I}_{i} \bar{p} \text{ iff }  \bar{o}_s R_{i,s} \bar{p}_s  \text{ for every } s\in S, \]
	 and similarly \[\bar{o}\preceq^{\Pi_I}_i\bar{p} \text{ iff } \bar{o}_s\preceq_{i,s} \bar{p}\text{ for every } s\in S.\]
	As in the binary case, it is easy to check 
 that $R^{\prod_I}_i$ is reflexive and symmetric, since every $R_{i,s}$ is a reflexive and symmetric relation. It also follows that  $\preceq_i^{\prod_I}$ is a reflexive and transitive relation.
 
	The natural projection functions $\pi_s:\prod_{s\in S}O_s\to O_s$  provide the projection morphisms from $\prod_{s\in S} G_s$ to $G_s$. The universal property that this product satisfies is that if there is a game $X$ with morphisms $f_s:X\to G_s$ for every $s\in S$, then there is a unique morphism $u:X\to\prod _{s\in S}G_s$ such that for every $s\in S, \pi_s\circ u=f_s$. 
 \end{itemize}

 In categorical terms, coproducts are the dual of products, meaning that they have a similar structure, but with the arrows reversed. A concrete construction of coproducts can, however, look rather different from the product. In the category \gami\ we have:

\begin{itemize}
\item  \textbf{Binary coproduct}: a binary coproduct of two objects $G$ and $G'$ in a category is an object $G+G'$ together with morphisms $i_G:G\to G+G'$ and $i_{G'}:G'\to G+G'$ sich that for any object $Y$ and morphisms $f:G\to Y$ and $g:G'\to Y$, there exists a unique morphism $v:G+G'\to Y$ such that $v\circ i_G=f$ and $v\circ i_{G'}=g$, as in the following diagram: 
\[\xymatrix{
			G\ar[r]^{i_G}\ar[dr]_{f}	&G+ G'\ar@{-->}[d]^{v}	&G'\ar[l]_{i_{G'}}\ar[dl]^{g}\\
			&  O''					 	& \\
		}
		\] 
 If we denote with $X+Y$ the disjoint union of the sets $X$ and $Y$, and identify the elements in $X$ and $Y$  with their inclusion in $X+Y$, given $G = \langle  O, \{R_i\}_{i \in I}, \{\preceq _i\}_{i \in I} \rangle$ and $G' = \langle  O', \{R'_i\}_{i \in I}, \{\preceq '_i\}_{i \in I} \rangle$, a coproduct arises by forming 
	$$G+_I G'  = \langle  O+ O', \{R_{i}+R'_i\}_{i \in I}, \{\preceq_i+\preceq'_i\}_{i \in I} \rangle,$$
	 where $R_i+R'_i$ is a reflexive and symmetric relation since $R_i$ and $R'_i$  are reflexive and symmetric relations for every $i\in I$. Similarly,  $\preceq_i+\preceq'_i$ is a reflexive and transitive relation, since $\preceq_i$ and $\preceq'_i$ are reflexive and transitive relations. The definitions of these relations ensure that the set inclusions $i_O:O\to O+O'$ and $i_{O'}:O'\to O+O'$ correspond to the inclusion morphisms $i_G:G\to G+_IG'$  and $i_{G'}:G'\to G+_IG'$ respectively. The morphism $v$, also typically indicated by $[f,g]$ is defined by $[f,g](o)=f(o)$ if $o\in O$ and $[f,g](o)=g(o)$ if $o\in O'$.
 
An interpretation of the game $G+_IG'$  is that the agents in the set $I$ are given access to all possible outcomes from both  $G$ and $G'$, meaning they can consider and respond to the full set of consequences arising from either game.

\item \textbf{Small coproducts}: As we saw for small products,  small coproducts are the generalization of binary coproducts to a family of games indexed by a  \textit{set}  $S$: let $\mathcal{G}=\set{G_s}_{s\in S}$, with $G_s= \langle O_s, \{R_{i,s}\}_{i \in I}, \{\preceq _{i,s}\}_{i \in I} \rangle$. We can define a small  coproduct through 
	\[\coprod_{s\in S} G_s  = \langle  \coprod_{s\in S} O_s, \{R^{\coprod_I}_{{i}}\}_{i \in I}, \{\preceq^{\coprod_I}_{i}\}_{i \in I} \rangle.\]
	We can write the elements of $\coprod_{s\in S} O_s$ as pairs of the form $\pair{o_s,s}$, with ${o}_s\in O_s$ for every $s\in S$, and the relations on the coproduct are defined by the disjoint union of the relations:
	\[ R^{\coprod_I}_{{i}}=\coprod_{s\in S}R_{i,s}.\]
    Put another way, we have that 
    \[\pair{o_s,s} R^{\coprod_I}_{{i}} \pair{p_{s'},s'} \text{ iff } s=s' \text{ and } o_sR_{i,s}p_s.\]
     	 and similarly \[\pair{o,s}\preceq^{\coprod_I}_i\pair{p,s'} \text{ iff } s=s' \text{ and } o\preceq_{i,s} p.\]
	 The reflexivity and symmetry of the relations $R^{\coprod_I}_{{i}}$  follow easily from the definition and the fact that each relation $R_{i,s}$ is reflexive and symmetric. Similarly,  $\preceq_i^{\coprod_I}$ is a reflexive and transitive relation, because each $\preceq_{i,s}$ is so. The definitions ensure that the injections $i_s:O\to \coprod_{s\in S}O_s$ given by $i_s(o)=\pair{o,s}$ give the corresponding inclusion morphisms $i_s:G_s\to \coprod_{s\in S}G_s$.
\end{itemize} 

{Besides products, two other limit objects can be defined in \gami, namely  {\em terminal objects} and {\em equalizers}}:
\begin{itemize}
\item \textbf{Terminal object}: let $\mathbb{T}_I = \langle  O_{\mathbb{T}},\{ R^{\mathbb{T}_I }_{i}\}_{i\in I}, \{ \preceq^{\mathbb{T}_I }_{i}\}_{i\in I} \rangle$ be a game in which  $O_{\mathbb{T}}$ is a singleton $O_{\mathbb{T}}=\set{o}$, while all the relations $R^{\mathbb{T}_I }_{i}$ and $\preceq^{\mathbb{T}_I }_{i}$ are the identity on $O_{\mathbb{T}}$, $\set{\pair{o,o}}$. This game has the universal property that given any game  $G$ in \gami, there is a unique morphism from $G$ to $\mathbb{T}_I$, given by the function  $!_{O}: O\to O_{\mathbb{T}}$ that takes all the outcomes of $G$ to the unique one of $\mathbb{T}_I$. 
\end{itemize}
In words: there exists a game $\mathbb{T}_I$ (unique up to isomorphism) in \gami\  such that for each game $G$ there exists a unique morphism from $G$ to $\mathbb{T}_I$. 

 \begin{itemize}
 \item \textbf{Equalizer}: 
 let \( f \) and \( g \) be two morphisms from \( G \) to \( G' \). Then we can define a game  
\[
E = \langle O_E, \{ R^{E_I }_{i} \}_{i\in I}, \{ \preceq^{E_I }_{i} \}_{i\in I} \rangle,
\]  
where  
\( O_E = \{ o \in O \mid f(o) = g(o) \} \) consists of the outcomes on which  \( f \) and \( g \) coincide,  \( R^{E_I }_{i} = R_i|_{O_E} \) is the restriction of \( R_i \) to \( O_E \), and   \( \preceq^{E_I }_{i} = \preceq _{i}|_{O_E} \) is the restriction of \( \preceq_i \) to \( O_E \).  

Since \( R^{E_I }_{i} \) is a restriction of \( R_i \), it remains reflexive and symmetric. Similarly, \( \preceq^{E_I }_{i} \), being a restriction of \( \preceq_i \), remains a preorder for every \( i \in I \). The equalizing morphism \( e: E \to G \) is given by the inclusion map \( e: O_E \to O \).  

Moreover, \( E \) satisfies the universal property of equalizers: for any game \( X \) with a morphism \( h: X \to G \) such that \( f \circ h = g \circ h \), there exists a unique morphism \( u: X \to E \) making the diagram commute, \textit{i.e.},  
$e \circ u = h$. 
This property ensures that \( E \) is the most general game capturing the constraints imposed by the equality \( f = g \) on the outcomes.
\[
\xymatrix{
  X \ar@{-->}[d]_{ u} \ar[dr]^{h}&  \\
  E \ar[r]_{e} & G \ar@<.5ex>[r]^{f} \ar@<-.5ex>[r]_{g} & G'
}
\]

\end{itemize} 
 
Analogously, the corresponding {\em colimits}, i.e. {\em initial objects} and {\em coequalizers}, also exist in \gami:

\begin{itemize}
\item  \textbf{Initial object}: consider the  empty game $\bm{\emptyset}_I$, where the set of outcomes is the empty set, and for each $i\in I$,  all the relations $R_i$ and $\preceq_i$ are the empty set as well. There exists a unique morphism from $\bm{\emptyset}_I$ to any game $G$.
	
\item	 \textbf{Coequalizer}: in the category of sets the coequalizer of two functions $f, g:X\to Y$ is obtained as a quotient by  a relation $\sim$, the smallest equivalence relation on the set $Y$ such that the pairs $\pair{f(x),g(x)}$ are in the relation for every $x\in X$.  If we denote by $[y]$  the elements of $Y/_{\sim}$,  for any relation $R$, and equivalence relation $\sim$ over $Y$, $ R/_{\sim}$ is the relation $\set{\pair{[y],[y']}:\pair{y,y'}\in R}$.

Accordingly, if $f$ and $g$ are two morphisms from $G$ to $G'$ let ${\sim}$ be the equivalence relation on $O'$ generated by the pairs $\pair{f(o),g(o)}$ for all the outcomes $o$ of the game $G$. Thus we can define a game 
		\[G'/_{\sim} =\langle O'/_{\sim},  \{R'_{j}/_\sim\}_{j \in J}, \{(\preceq'_{j}/_\sim)^t\}_{j \in J}\rangle\]
	where  by $(\preceq'_{j}/_\sim)^t$ we denote the transitive closure of the relation $\preceq'_{j}/_\sim$.  
	
	It can be proved that $[\cdot]:G' \rightarrow G'/_{\sim}$ is a morphism in \gami\ and that $G'/_{\sim}$ has the universal mapping property:  $[\cdot]\circ f=[\cdot]\circ g$ and if there is a morphism $m:G'\to G''$ such that $m\circ f=m\circ g$, then there is a unique morphism $u:G'/_{\sim}\to G''$ such that $u\circ[\cdot]=m$.    

\end{itemize}

Recalling that a category is {\em complete} if it has small products and equalizers and it is {\em cocomplete} if it has all the small coproducts and coequalizers we have shown that:

\begin{theorem}~\label{complete1}
	\gami  \  is a complete and cocomplete category.
\end{theorem}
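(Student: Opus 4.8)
The plan is to assemble Theorem~\ref{complete1} directly from the constructions already exhibited in this section, invoking the standard characterizations of completeness and cocompleteness. Recall that a category is complete if and only if it has all small products and all equalizers (equivalently, a terminal object and all small pullbacks), and dually cocomplete if and only if it has all small coproducts and all coequalizers. So the proof is essentially a bookkeeping argument: cite each of the four (really six, counting the nullary cases) constructions given above and verify they have the claimed universal properties within \gami.

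First I would record the completeness half. The small product $\prod_{s\in S}G_s$ was constructed above, with accessibility relation $R^{\Pi_I}_i$ defined coordinatewise and likewise $\preceq^{\Pi_I}_i$; I would note that coordinatewise reflexivity and symmetry (resp.\ reflexivity and transitivity) are inherited, so the structure is a genuine object of \gami, that each projection $\pi_s$ preserves both relations coordinate by coordinate, and that the mediating map $u(x)=(f_s(x))_{s\in S}$ is the unique function through which all the $f_s$ factor and that it preserves $R_i$ and $\preceq_i$ precisely because each $f_s$ does. The terminal object $\mathbb{T}_I$ handles the empty product, and the equalizer $E=\{o\in O: f(o)=g(o)\}$ with the restricted relations handles equalizers; here the key point to state is that a morphism $h:X\to G$ with $f\circ h=g\circ h$ has image inside $O_E$, so it factors uniquely through the inclusion $e$, and the corestriction $u:X\to E$ still preserves the relations since they are mere restrictions of those on $G$. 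By the characterization theorem, \gami\ is complete.

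Dually I would record cocompleteness. The small coproduct $\coprod_{s\in S}G_s$ on the disjoint union of outcome sets, with the disjoint-union relations, is an object of \gami\ (reflexivity, symmetry, transitivity are checked blockwise), the injections $i_s$ preserve the relations, and the copairing $[f_s]_{s\in S}$ is the unique relation-preserving map out of it. The initial object $\bm{\emptyset}_I$ covers the empty coproduct. For the coequalizer of $f,g:G\to G'$ one passes to $O'/_\sim$ with $\sim$ the equivalence relation generated by the pairs $\langle f(o),g(o)\rangle$; the one subtlety worth flagging is that the pushed-forward preference relation need not be transitive, which is why one takes its transitive closure $(\preceq'_j/_\sim)^t$, and one must check that the quotient map $[\cdot]$ remains monotone and that the universal factorization $u:G'/_\sim\to G''$ through any $m$ with $m\circ f=m\circ g$ is well defined and relation-preserving --- transitivity of the target's preorder being exactly what lets the transitive closure pass through. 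By the dual characterization, \gami\ is cocomplete.

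The genuinely delicate step is this last one: the coequalizer. Unlike products, equalizers and coproducts, where the relations on the constructed object are just coordinatewise or restricted versions of given ones and all properties transfer mechanically, the coequalizer forces a quotient of the outcome set, which can collapse the accessibility and preference relations in ways that destroy transitivity of $\preceq$ (and could in principle be an issue were we also demanding, say, antisymmetry, which we are not). The transitive closure repairs this, but one then owes a short argument that (i) $[\cdot]$ still sends $\preceq'_j$ into $(\preceq'_j/_\sim)^t$ (immediate, since $\preceq'_j/_\sim\subseteq(\preceq'_j/_\sim)^t$), and (ii) for any relation-preserving $m:G'\to G''$ coequalizing $f,g$, the induced set map $u$ on the quotient sends $(\preceq'_j/_\sim)^t$ into $\preceq''_j$ --- which holds because $u$ already preserves the generating relation $\preceq'_j/_\sim$ and $\preceq''_j$ is transitive, so it absorbs the closure. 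The accessibility side needs no closure since symmetry and reflexivity are preserved by arbitrary quotients. I would state these verifications as the content of the ``it can be proved'' remark made above, and then conclude the theorem by citing the two characterization results. Everything else is routine and already displayed.
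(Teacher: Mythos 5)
Your proposal follows essentially the same route as the paper: it verifies completeness via small products (with the terminal object and equalizers) and cocompleteness via small coproducts (with the initial object and coequalizers), using exactly the constructions displayed in the section, and it correctly identifies and resolves the one delicate point --- taking the transitive closure $(\preceq'_j/_\sim)^t$ in the coequalizer and checking that the universal factorization still preserves it because the target preorder is transitive. This is a correct write-up of the argument the paper leaves largely to the reader.
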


This result indicates that on top of the constructions given above, the category \gami\ supports the construction of limits and colimits of any small diagram. This includes \textit{pullbacks} and \textit{pushouts} (see Example \ref{examplePushout}), and in general provides a strong foundation for working with categorical constructions systematically.

Another construction of interest
is that of {\em exponential object}. The game $G'^G$ has  as outcomes the different ways that the outcomes in  $G$ can be mapped to the outcomes in $G'$  while preserving the accessibility and preference relations. 
We have:

 \begin{proposition}~\label{exp}
    Given games $G, G'$  in \gami, there exists an {\em exponential object} $G'^G$.
\end{proposition}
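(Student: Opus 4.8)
The plan is to construct the exponential by hand, gluing together componentwise (over $i\in I$) the known exponentials of $\mathbf{EndoRel}$ and $\mathbf{PreOrd}$ from \cite{viglizzo23basic}, and then to verify the universal property directly. First I would take the set of outcomes of $G'^G$ to be the set of all \gami-morphisms $f\colon G\to G'$ (a set, since $O$ and $O'$ are sets), and for each $i\in I$ put on it the relations
\[ f\,R_i^{G'^G}\,g \quad\text{iff}\quad \text{for all } o,p\in O,\ oR_ip\ \text{ implies }\ f(o)\,R'_i\,g(p), \]
\[ f\preceq_i^{G'^G}g \quad\text{iff}\quad \text{for all } o,p\in O,\ o\preceq_ip\ \text{ implies }\ f(o)\preceq'_ig(p). \]
(This choice is essentially forced: an exponential object, if it exists, has outcomes in bijection with the morphisms $\mathbb{T}_I\times_I G\to G'$, hence with the morphisms $G\to G'$.)

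The first batch of checks is that $G'^G$ is a legitimate object of \gami. Reflexivity of $R_i^{G'^G}$ and of $\preceq_i^{G'^G}$ is exactly the statement that every outcome $f$ preserves $R_i$ and $\preceq_i$, which holds precisely because the outcomes were chosen to be morphisms --- this is also why arbitrary functions $O\to O'$ will not do. Symmetry of $R_i^{G'^G}$ follows by instantiating its defining condition at the pair $\pair{p,o}$ (using symmetry of $R_i$ in $G$) and then symmetry of $R'_i$ in $G'$. The one mildly delicate point is transitivity of $\preceq_i^{G'^G}$: from $f\preceq_i^{G'^G}g\preceq_i^{G'^G}h$ and $o\preceq_ip$, the first relation gives $f(o)\preceq'_ig(p)$, and applying the second relation to the \emph{reflexive} pair $\pair{p,p}$ gives $g(p)\preceq'_ih(p)$, so $f(o)\preceq'_ih(p)$ by transitivity of $\preceq'_i$.

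Then I would exhibit the evaluation morphism $\mathrm{ev}\colon G'^G\times_I G\to G'$, $\mathrm{ev}\pair{f,o}=f(o)$, which preserves $R_i^{\times_I}$ and $\preceq_i^{\times_I}$ immediately from the definitions of the exponential relations and of the product relations. For the universal property, given any game $X$ and morphism $\phi\colon X\times_I G\to G'$, I would define the transpose $\lambda\phi\colon X\to G'^G$ by $\lambda\phi(x)(o)=\phi\pair{x,o}$ and check, in order: (i) each $\lambda\phi(x)$ really is a morphism $G\to G'$, where reflexivity of the relations of $X$ is used to pass from $oR_ip$ (resp.\ $o\preceq_ip$) to $\pair{x,o}R_i^{\times_I}\pair{x,p}$ (resp.\ $\pair{x,o}\preceq_i^{\times_I}\pair{x,p}$); (ii) $\lambda\phi$ itself preserves $R_i$ and $\preceq_i$, read off directly from the product relations on $X\times_I G$ and the defining conditions of $R_i^{G'^G}$, $\preceq_i^{G'^G}$; (iii) $\mathrm{ev}\circ(\lambda\phi\times_I\mathrm{id}_G)=\phi$ by construction; and (iv) $\lambda\phi$ is the unique morphism with property (iii), since any $\psi\colon X\to G'^G$ with $\mathrm{ev}\circ(\psi\times_I\mathrm{id}_G)=\phi$ must satisfy $\psi(x)(o)=\phi\pair{x,o}=\lambda\phi(x)(o)$ for all $o$.

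I expect the main obstacle to be organizational rather than conceptual: keeping straight which of the reflexivity, symmetry and transitivity hypotheses on $R_i$, $R'_i$, $\preceq_i$, $\preceq'_i$ is needed at each step --- in particular, taking the outcomes to be morphisms so that the exponential's relations are reflexive, using reflexivity of $X$'s relations when forming the transpose, and using reflexivity of $\preceq_i$ when proving transitivity of $\preceq_i^{G'^G}$. Since each of these reduces to the corresponding fact for $\mathbf{EndoRel}$ or $\mathbf{PreOrd}$ established in \cite{viglizzo23basic}, and the work is done separately for each $i\in I$, the only genuinely new verification is that symmetry of the accessibility relations is inherited by the exponential --- the symmetry check above.
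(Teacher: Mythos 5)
Your construction is correct and takes essentially the same route as the paper: the same outcome set (the \gami-morphisms $G\to G'$), the same accessibility relation, and the same transpose/evaluation verification of the universal property. The only cosmetic difference is that you define $\preceq_i^{G'^G}$ by the implication-form condition ($o\preceq_i p$ implies $f(o)\preceq'_i g(p)$) while the paper uses the pointwise condition $f(o)\preceq'_i g(o)$; as the paper itself observes, these coincide on morphisms, so the resulting object is the same.
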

 
\begin{proof}  Given two games $G = \langle O,\{R_{i}\}_{i \in I}, \{\preceq _{i}\}_{i \in I}\rangle$ and $G' = \langle  O', \{R'_i\}_{i \in I}, \{\preceq '_i\}_{i \in I} \rangle$, we define a game:
		\[G'^{G} = \langle \mathcal{O}, \{R''_{i}\}_{i \in I}, \{\preceq''_{i}\}_{i \in I}\rangle\]
	where 	{$\mathcal{O} =\set{f\in O'^O:f  \ \mbox{is a morphism from } G \text{ to }G'}$}, and the relations $R''_i$ and $\preceq''_i$ are defined by: for all $f, g\in\mathcal{O}$, 
	\[fR''_ig \text{ if and only if for all }o, p\in O, oR_ip \text{ implies }f(o)R_i'g(p),\]
	and
 	{\[ (*) \ \ \ \ \ f\preceq''_ig \text{ if and only if for all }o\in O, f(o)\preceq'_i g(o). \]}
{The condition in the definition of $\preceq''_i$ may seem simpler than the one for $R''_i$, but given that $\preceq_i$ and $\preceq'_i$ are preorders, one can derive a similar condition: if   $o\preceq_i p$, then by ($*$), $f(o)\preceq'_ig(o)$,  and since $g$ is a morphism, $g(o)\preceq'_i g(p)$ so $f(o)\preceq'_ig(p)$ by the transitivity of $\preceq'_i$.}
 
With these definitions, we can prove that  $R''_{i}$ is reflexive and symmetric. Indeed, since for each morphism $f:G\to G'$, every $i\in I$ and every $o,p\in O$, $oR_ip$ implies $f(o)R'_if(p)$, we have that $fR''_if$. For symmetry, assume that $fR''_ig$. Then, if $oR_ip$, by the symmetry of $R_i, pR_io$ so $f(p)R'_ig(o)$ and by the symmetry of $R'_i$, $g(o)R'_if(p)$, which proves $gR''_if$.
	
Similarly, it can be proved that  $\preceq''_{i}$ is a preorder. Reflexivity is immediate, since for all $o \in O$, $f(o) \preceq'_i f(o)$ and thus $f \preceq''_i f$. For transitivity assume that $f \preceq''_i g$ and $g \preceq''_i h$. Then, for all $o \in O$ we have that $f(o) \preceq'_i g(o)$ and $g(o) \preceq'_i h(o)$. By transitivity of $\preceq'_i$ we have $f(o) \preceq'_i h(o)$ for all $o \in O$. That is, $f \preceq''_i h$.

For any other game $G'' = \langle  O'', \{S_i\}_{i \in I}, \{\preceq_i\}_{i \in I}\rangle$, given a morphism $h: G^{''} \times G \rightarrow G'$, we can define as in the category of sets $\psi: G''  \rightarrow G'^G$ by $\psi(o'')(o)=h(o'',o)$.  First, we need to check that for every $o''\in O''$, $\psi(o'')$ is a morphism from $G$ to $G'$, and then that $\psi$ itself is a morphism from $G''$ to $G'^G$.

 To see the first part, fix $o''\in O''$, and  suppose that $o,p\in O$ are such that $oR_ip$. Then, as $\pair{o'',o} (S_i{\times_I}R_i)\pair{o'',p}$, it follows that $h({o'',o}) R'_ih({o'',p})$, that is, $\psi(o'')(o) R'_i \psi(o'')(p)$. 

 To show that $\psi$ is a morphism, take now $o'',p''\in O''$ such that $o''S_ip''$. For any $o, p\in O$ such that $oR_ip$,  we have that $\pair{o'',o} (S_i{\times_I}R_i)\pair{p'',p}$, and thus $h({o'',o}) R'_ih({p'',p})$ so  $\psi(o'')(o) R'_i \psi(p'')(p)$. This implies that $\psi(o'') R''_i \psi(p'')$.
 
 Thus $\psi$ is a morphism that makes the following diagram commute, and its uniqueness follows from its definition on the corresponding sets.
	\[\xymatrix{
		G''\ar@{-->}[d]_{\psi} & 	G'' \times G \ar[d]_{\psi \times id_G} \ar[rrd]^{{h}} & & \\
		G'^{G}	&  G'^{G} \times G \ar[rr]_{eval} &  & G' }\]
 \end{proof}

In game-theoretic terms, the exponential game  \(G'^G \) can be interpreted as a game where the outcomes correspond to \emph{strategic transformations} that is, choices of how to play one game (\( G \)) within another game (\( G' \)).  
 The outcomes of \( G'^G \) are morphisms \( f: G' \to G \), meaning that each possible outcome maps the  structure of \( G \) into \( G' \).  A player choosing an outcome in \(  G'^G \) is effectively selecting a way to see aspects of the game \( G \) reflected in the game \( G' \). The outcomes available to the players  are not just those of a single game but could represent possible manners of determining which aspects of one game are reflected in another.   

In essence, an exponential game represents a space of transformations, allowing players to select how one game is interpreted within another. This can be interpreted as capturing features like \textit{adaptability} and \textit{policy selection}.

	As a consequence of Theorem~\ref{complete1} and Proposition~\ref{exp}:
	
	\begin{theorem}~\label{cartesian}
		\gami \ is a cartesian complete category.
	\end{theorem}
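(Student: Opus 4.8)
The plan is simply to assemble Theorem~\ref{complete1} and Proposition~\ref{exp}, together with the terminal object and binary products exhibited above, and to record that ``cartesian complete'' means exactly ``complete and cartesian closed.'' Recall that a category is \emph{cartesian closed} when it has a terminal object, binary products, and for every object $G$ the functor $-\times_I G$ admits a right adjoint, namely the exponential $(-)^G$. Thus it suffices to verify: (i) \gami\ is complete; (ii) \gami\ has a terminal object and binary products; (iii) exponentials exist and are genuinely right adjoint to the product functor $-\times_I G$.

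For (i), completeness is precisely the content of Theorem~\ref{complete1}: \gami\ has all small products and all equalizers, and by the standard fact that a category with small products and equalizers has all small limits, \gami\ is complete. In particular this already delivers a terminal object (as the empty product) and all binary products, settling (ii); of course both were also constructed explicitly above as $\mathbb{T}_I$ and $G\times_I G'$, so one may simply cite those constructions instead.

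For (iii), Proposition~\ref{exp} constructs, for any games $G$ and $G'$, an object $G'^{G}$ together with an evaluation morphism and, for every game $G''$, the currying bijection $h\mapsto\psi$ with $\psi(o'')(o)=h(o'',o)$ between morphisms $G''\times_I G\to G'$ and morphisms $G''\to G'^{G}$. The only point that needs a line of care here is that this bijection is natural in $G''$, so that it genuinely assembles into the adjunction $(-\times_I G)\dashv\bigl((-)^{G}\bigr)$ rather than merely giving a ``weak'' exponential; this follows because currying and uncurrying are defined pointwise on the underlying sets exactly as in $\Set$ and therefore commute with precomposition, while the relation-preservation checks for both directions were already carried out in the proof of Proposition~\ref{exp}, and uniqueness was noted there as well.

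Combining (i)--(iii), \gami\ is complete and cartesian closed, i.e., cartesian complete, as claimed. I do not expect a genuine obstacle: the statement is a corollary of the two cited results, and the only mildly delicate issue — confirming that the construction of Proposition~\ref{exp} is an honest right adjoint, uniformly in $G''$ — is already implicit in the uniqueness clause of that proof and in the pointwise (set-level) definition of currying.
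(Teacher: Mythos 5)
Your proposal is correct and follows essentially the same route as the paper, which presents Theorem~\ref{cartesian} as an immediate consequence of Theorem~\ref{complete1} (completeness) and Proposition~\ref{exp} (exponentials); your extra remark on the naturality of the currying bijection only makes explicit what the universal-property diagram in Proposition~\ref{exp} already provides.
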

	
\begin{proposition}\label{injectionExp}
    If $G\neq\bm{\emptyset}$, then there is an injection $\psi:G'\to G'^G$.
\end{proposition}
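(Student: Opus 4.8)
The plan is to realize $\psi$ via \emph{constant morphisms}. Writing $G = \langle O,\{R_i\}_{i\in I},\{\preceq_i\}_{i\in I}\rangle$ and $G' = \langle O',\{R'_i\}_{i\in I},\{\preceq'_i\}_{i\in I}\rangle$, for each outcome $o'\in O'$ let $c_{o'}\colon O\to O'$ be the function with $c_{o'}(o)=o'$ for every $o\in O$, and set $\psi(o')=c_{o'}$. The first thing to check is that each $c_{o'}$ is genuinely an outcome of $G'^G$, i.e.\ a morphism $G\to G'$: if $oR_ip$ then we need $c_{o'}(o)R'_ic_{o'}(p)$, which is just $o'R'_io'$, true because $R'_i$ is reflexive; and if $o\preceq_ip$ we need $o'\preceq'_io'$, true because $\preceq'_i$ is reflexive. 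So $\psi$ is a well-defined function from $O'$ into the outcome set $\mathcal{O}$ of $G'^G$.

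Next I would verify that $\psi$ is a morphism in \gami, i.e.\ that it preserves every $R'_i$ and every $\preceq'_i$, by unwinding the definitions of $R''_i$ and $\preceq''_i$ from Proposition~\ref{exp}. Suppose $o'R'_ip'$. To conclude $\psi(o')R''_i\psi(p')$ one must show that for all $o,p\in O$ with $oR_ip$ we have $c_{o'}(o)R'_ic_{p'}(p)$, which reads $o'R'_ip'$ --- exactly the hypothesis. Similarly, if $o'\preceq'_ip'$, then by condition $(*)$ proving $\psi(o')\preceq''_i\psi(p')$ amounts to showing $c_{o'}(o)\preceq'_ic_{p'}(o)$ for all $o\in O$, i.e.\ $o'\preceq'_ip'$ again. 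Hence $\psi$ is a morphism.

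Finally, injectivity is where the hypothesis $G\neq\bm{\emptyset}$ enters: since the empty game $\bm{\emptyset}$ is the initial object, whose outcome set is empty, $G\neq\bm{\emptyset}$ gives $O\neq\emptyset$, so we may fix some $o_0\in O$. If $\psi(o')=\psi(p')$ as functions, then evaluating at $o_0$ yields $o'=c_{o'}(o_0)=c_{p'}(o_0)=p'$, so $\psi$ is injective (and, being an injective function underlying a morphism, a monomorphism). I do not expect a genuine obstacle here; the only real point is to notice that the nonemptiness of $G$ is used precisely in this last step --- if $O=\emptyset$, then $G'^G$ has a single outcome (the empty function) and $\psi$ would collapse all of $O'$, failing injectivity as soon as $|O'|>1$. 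Everything else is immediate from reflexivity of the $R'_i$ and $\preceq'_i$ and from the definitions of the exponential's relations.
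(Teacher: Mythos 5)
Your proposal is correct and follows essentially the same route as the paper: both realize $\psi$ via the constant functions $o'\mapsto\bm{o'}$, check preservation of the relations by unwinding the definitions of $R''_i$ and $\preceq''_i$, and use the nonemptiness of $O$ for injectivity (a point you make explicit where the paper simply writes ``clearly''). No gaps.
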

\begin{proof}
    Define for each $o'\in O'$, $\psi(o')=\bm{o'}$, where $\bm{o'}$ is the constant function defined by  $\bm{o'}(o)=o'$ for every  $o\in O$. Clearly $\psi$ is injective. Now, if we have that for some player $i\in I$, $o'R'_ip'$, then $\psi({o'})R''_i\psi({p'})$: indeed, if $o,p\in O$ and $oR_ip$, then $\bm{o'}(o)R'_i\bm{p'}(p)$. A simpler argument works for the preferences. 
\end{proof}

The theorem above means that the original game \( G' \) can be identified with a subset of the  transformations in \( G'^{G} \).  
So, there is a way to reinterpret the outcomes of \( G' \) as particular  mappings from \( G \) to \( G' \). 

This result hints at the richness of the structure of the exponential game, and shows once again the advantages of working in a categorical framework.
 
 A particular case of \gami \ is the category of ``games'' with a single player $\mathbf{1}$, denoted {\bf Gam}$_{\mathbf{1}}$. It is immediate to see that Theorem~\ref{complete1} and Proposition \ref{exp} apply to {\bf Gam}$_{\mathbf{1}}$, where $I = \{\mathbf{1}\}$. In this category the objects represent  {\em decision problems}. The accessibility relation $R_{\mathbf{1}}$ can be understood as the feasibility of changing from a possible outcome to another, while $\preceq_{\mathbf{1}}$ represents the preferences over the outcomes.

 We also have \textbf{Gam}$_\emptyset$, in which the games have no players and therefore no relations are given over the sets of outcomes. Thus, each game in this category can be identified with the set of its outcomes and the morphisms among games in this category with the corresponding set functions. 

 \begin{proposition}
Given a morphism $f:G\to G'$   in \gami, 
\begin{enumerate}
    \item $f$ is monic iff $f:O\to O'$ is injective.
    \item $f$ is epic iff $f:O\to O'$ is surjective.
\end{enumerate}
\end{proposition}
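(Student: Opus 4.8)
The claim is two biconditionals, and in each the direction asserting that the set-theoretic property \emph{suffices} is routine. Since a morphism of \gami\ is nothing but its underlying function on outcomes and composition of morphisms is composition of those functions, an $f$ whose underlying map $f:O\to O'$ is injective is immediately left-cancellable, and one whose underlying map is surjective is immediately right-cancellable, exactly as in $\mathbf{Set}$. So the content lies in the two converses, and for each I would produce a ``test'' game witnessing failure of monicity or epicity whenever the underlying set map fails to be injective or surjective.

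For \emph{monic implies injective}, I would probe with the terminal object $\mathbb{T}_I$. Take $o_1,o_2\in O$ with $f(o_1)=f(o_2)$ (if $O=\emptyset$ the claim is vacuous). Because each $R_i$ and each $\preceq_i$ is reflexive, the two constant functions from the single outcome of $\mathbb{T}_I$ to $o_1$ and to $o_2$ are morphisms $g_1,g_2:\mathbb{T}_I\to G$. Then $f\circ g_1=f\circ g_2$, so monicity forces $g_1=g_2$, whence $o_1=o_2$.

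The step I expect to be the main obstacle is \emph{epic implies surjective}, since surjectivity of a set map is not generally detected by maps into a fixed small object once the objects carry structure; one has to construct a target that ``splits'' a point missing from the image. Assuming $b_0\in O'\setminus f(O)$, I would build a game $Y$ with outcome set $O'\sqcup\{b_0'\}$, where $b_0'$ is a fresh duplicate of $b_0$, and give $Y$ all its relations by pulling those of $G'$ back along the collapsing function $c:O'\sqcup\{b_0'\}\to O'$ that is the identity on $O'$ and sends $b_0'\mapsto b_0$: set $x\mathrel{S_i}y$ exactly when $c(x)\mathrel{R'_i}c(y)$, and $x\sqsubseteq_i y$ exactly when $c(x)\preceq'_i c(y)$. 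These inherit reflexivity and symmetry from $R'_i$, and reflexivity and transitivity from $\preceq'_i$, so $Y$ is an object of \gami. The inclusion $g:G'\to Y$ is then a morphism, and so is the map $h:G'\to Y$ with $h(b_0)=b_0'$ and $h(o')=o'$ for $o'\neq b_0$ --- the key observation being that $c\circ h$ is the identity on $O'$, so $h$ preserves both relations automatically. Since $b_0\notin f(O)$ we get $h\circ f=g\circ f$, while $g\neq h$ because $g(b_0)=b_0\neq b_0'=h(b_0)$; this contradicts $f$ being epic, so $f$ is surjective.

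As a remark, the statement also follows abstractly: the forgetful functor $U:\gami\to\mathbf{Set}$ is faithful and has both a left adjoint (equip a set with the diagonal relations) and a right adjoint (equip it with the total relations), so it preserves and reflects monos and epis, which together with the characterization of monos and epis in $\mathbf{Set}$ gives the result. I would present the explicit argument and include this as a side observation.
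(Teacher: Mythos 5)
Your proposal is correct, and its overall shape matches the paper's: both directions that follow from $\mathbf{Set}$ are dispatched by faithfulness of the underlying function, and ``monic implies injective'' is proved exactly as in the paper, by probing with the one-outcome game with identity relations (the terminal object) and the two constant morphisms picking out $o_1$ and $o_2$, which are morphisms by reflexivity. Where you genuinely diverge is in ``epic implies surjective.'' The paper builds a two-outcome test game $G''=\set{a,b}$ carrying the \emph{total} relations $O''\times O''$ for every player, and compares the constant morphism $\bm{a}$ with the map sending the missed outcome $o'$ to $b$ and everything else to $a$; totality of the relations makes any function into $G''$ a morphism, so the argument is short. You instead duplicate the missed point, forming $Y$ on $O'\sqcup\set{b_0'}$ with structure pulled back along the collapse $c$, and compare the inclusion with the map redirecting $b_0$ to $b_0'$; this is a cokernel-pair--style construction, and your verification (the pulled-back relations are reflexive, symmetric, resp.\ preordered; both maps preserve them because $c$ composed with each is a morphism into $G'$) is sound. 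The paper's witness is smaller and needs no bookkeeping about $G'$ itself, while yours is the more canonical construction and adapts verbatim to settings where an indiscrete two-point object is unavailable. Your closing remark via the forgetful functor $U:\gami\to\mathbf{Set}$ is also correct: the diagonal-relation and total-relation structures give left and right adjoints to $U$, so $U$ preserves monos and epis, and faithfulness lets it reflect them; the paper does not take this abstract route, but it is a legitimate one-line justification of the same proposition.
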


\begin{proof}
\begin{enumerate}
    \item If $f:O\to O'$ is not injective, there exist outcomes $o\neq p\in O$ such that $f(o)=f(p)$. Consider the terminal game $\mathbb{T}_{\bm{1}}$ with a single outcome $*$ in which all the relations $R_i$ and $\preceq_i$ are the identity on the singleton. Let $x$ and $y$ be the functions that send $*$ to  $o$ and $p$ respectively. It is easy to check that $x$ and $y$ are morphisms and $f\circ x=f\circ y$ but $x\neq y$, so $f$ cannot be monic.

    In the other direction, assume that $f$ is injective, and $x$ and $y$ are morphisms from a game $G''$ to $G$ such that $f\circ x=f\circ y$. Then for every outcome $o''$ of $O''$ we have that $f(x(o''))=f(y(o''))$, and since $f$ is injective, $x(o'')=y(o'')$, so $x=y$. 
    \item If $f$ is not surjective, then there exists an outcome $o'$ that is not in the image of $f$. Consider a game $G''$ with two different outcomes, $O''=\set{a,b}$ and the relations $R''_i$ and   $\preceq''_i$ on them are all equal to $O''\times O''$. Now consider the constant morphism $\bm{a}$ and a morphism $g$ that sends all the outcomes in $O'$ to $a$, except for  $o'$ that goes to $b$. Since all elements are related in $G''$, $g$ is  a morphism and different from $\bm{a}$,  with $\bm{a}\circ f=g\circ f$.

    If we assume that $f$ is surjective and $g, h:G'\to G''$ are such that $g\circ f=h\circ f$, then for every $o'\in O'$ we have that $o'=f(o)$ for some $o\in O$. Therefore $g(o')=g(f(o))=h(f(o))=h(o')$ which proves that $g=h$ and therefore $f$ is epic.  \qedhere
\end{enumerate} 
\end{proof}

\subsection{Games with different sets of players}

 We now generalize \gami \ by considering 
 games with variable sets of players. Then, since a given pair of games $G$ and $G'$ may have different sets of players,  $I$ and $J$, a morphism $f: G \rightarrow G'$ should now indicate how $I$ maps onto $J$ and how it preserves the relations defining the games. More precisely:
	
	\begin{definition}~\label{definitionMorphism}
	Each object in the category \gam \ is  $G = \langle I, O, \{R_i\}_{i \in I}, \{\preceq _i\}_{i \in I} \rangle$ as in Definition \ref{definitionGame}. If $G' = \langle J, O', \{R'_j\}_{j \in J}, \{\preceq' _j\}_{j \in J} \rangle$ is also an object, a morphism  $f:G\to G'$   consists of a pair of functions $f_p: I \rightarrow J$ and  $f_O: O\to O'$, such that given $o, p \in O$, for each $i\in I$:
		\begin{itemize}
            \item {if  $oR_ip$ then $ f_O(o)R'_{f_p(i)} f_O(p)$}, and
			\item if  $o \preceq_i p$, then $f_O(o) \preceq_{f_p(i)}' f_O(p)$.
		\end{itemize}
		That is, for each $i\in I$, $f_O:\pair{O,R_i}\to\pair{O',R'_{f_p(i)}}$ and $f_O:\pair{O,\preceq_i}\to\pair{O',\preceq'_{f_p(i)}}$ are {\bf EndoRel} and {\bf PreOrd} morphisms, respectively. We write $f=(f_p,f_O)$.
	\end{definition}
	
	These objects and morphisms define a category $\mathbf{Gam}$, since similarly to the case of \gami \ the composition of morphisms yields a morphism and each $G$ has an associated identity morphism (consisting of the identities on the set of players and on the set of outcomes). Composition is also associative. 

 The following definitions highlight two particular kinds of games:
\begin{definition}
    A \emph{set of players} is a game $G_p(I)=\pair{I, \emptyset, \set{\emptyset}_{i\in I},\set{\emptyset}_{i\in I}}$, with an empty set of outcomes and empty accessibility and preference relations. If the set $I=\set{i}$ is a singleton, we refer to this game as  \emph{player} $G_p(i)$.
\end{definition}

\begin{definition}
    A \emph{set of outcomes} is a game $G_O(O)=\pair{ \emptyset,O, \emptyset,\emptyset}$, with an empty set of players and no accessibility nor preference relations. 
\end{definition}
 We abuse somewhat the terminology by naming these games \textit{players} and \textit{outcomes}, but it is useful to have them as objects in the category \gam. Furthermore, $G_p$ and $G_O$ are easily checked to be functors from the category of sets to  \gam.

\begin{proposition}\label{playerGame}
    For each game $G = \langle I, O, \{R_i\}_{i \in I}, \{\preceq _i\}_{i \in I} \rangle$, $S\subseteq I$, and $T\subseteq O$ there is a unique morphism $f$ from $G_p(S)$ to $G$ such that $f_p=i_S$, the inclusion map $S\hookrightarrow I$ , and also there is a unique $g$ from $G_O(T)$ to $G$ such that $g_O=i_T:T\hookrightarrow O$.
\end{proposition}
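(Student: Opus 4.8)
The plan is to exploit the fact that in each of the two cases one of the two components of a \gam-morphism is forced to be the empty function, which makes the relation-preservation conditions of Definition~\ref{definitionMorphism} vacuous; the only remaining data is exactly the component whose value is prescribed in the statement, so existence and uniqueness are immediate.

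First I would treat $G_p(S)=\pair{S,\emptyset,\set{\emptyset}_{i\in S},\set{\emptyset}_{i\in S}}$. A morphism $f=(f_p,f_O):G_p(S)\to G$ consists of a function $f_p:S\to I$ and a function $f_O:\emptyset\to O$; since the outcome set of $G_p(S)$ is empty, there is exactly one choice for $f_O$, namely the empty function. The two implications in Definition~\ref{definitionMorphism} are universally quantified over outcomes $o,p$ of the domain game, which here range over $\emptyset$, so they hold vacuously no matter what $f_p$ is. Hence a morphism $G_p(S)\to G$ is precisely a function $S\to I$, and imposing $f_p=i_S$ determines $f$ uniquely.

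Next I would treat $G_O(T)=\pair{\emptyset,O,\emptyset,\emptyset}$ dually. A morphism $g=(g_p,g_O):G_O(T)\to G$ consists of $g_p:\emptyset\to I$ and $g_O:T\to O$; there is exactly one function $g_p:\emptyset\to I$. The conditions of Definition~\ref{definitionMorphism} are indexed by $i$ ranging over the player set of the domain game, which here is $\emptyset$ (and correspondingly the accessibility and preference relations of $G_O(T)$ are empty), so there is nothing to check. Thus a morphism $G_O(T)\to G$ is precisely a function $T\to O$, and imposing $g_O=i_T$ determines $g$ uniquely.

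I do not expect any real obstacle here: both verifications reduce to the uniqueness of a function out of the empty set together with the observation that the morphism conditions become vacuous. The only point requiring care is to identify correctly which quantifier collapses in each case — the quantifier over outcomes of the domain for $G_p(S)$, and the quantifier over players of the domain for $G_O(T)$ — so that the argument is stated for the right reason in each half.
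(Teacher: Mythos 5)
Your argument is correct and matches the paper's (very terse) proof, which simply exhibits the morphisms $f=(i_S,\emptyset)$ and $g=(\emptyset,i_T)$; your proposal spells out exactly why these exist and are unique, namely that the remaining component is the empty function and the preservation conditions of Definition~\ref{definitionMorphism} hold vacuously. No gap.
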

\begin{proof}
    The morphisms are $f=(i_S, \emptyset)$ and $g=(\emptyset,i_T)$, respectively.
\end{proof}

	\begin{theorem}~\label{complete}
		$\mathbf{Gam}$ is a complete and cocomplete category.
	\end{theorem}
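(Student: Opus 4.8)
The plan is to follow the same route as for Theorem~\ref{complete1}: exhibit explicit small products and equalizers (which together give completeness) and small coproducts and coequalizers (which together give cocompleteness), verifying the relevant universal property in each case. The only genuinely new ingredient compared with \gami\ is the bookkeeping of the player component $f_p$ of a morphism, which we carry along in parallel with the outcome component $f_O$; everything else is a transcription of the \gami\ constructions.

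For a \textbf{small product} of a family $\{G_s\}_{s\in S}$ with $G_s=\langle I_s,O_s,\{R_{i,s}\}_{i\in I_s},\{\preceq_{i,s}\}_{i\in I_s}\rangle$, I would take player set $\prod_{s\in S}I_s$ and outcome set $\prod_{s\in S}O_s$, and for a tuple $\bar\imath=(i_s)_{s\in S}$ declare $\bar o\,R_{\bar\imath}\,\bar p$ iff $\bar o_s\,R_{i_s,s}\,\bar p_s$ for every $s$, and similarly for $\preceq_{\bar\imath}$. Reflexivity, symmetry and transitivity pass to these conjunctive relations exactly as in \gami; the pair of projections $(\mathrm{pr}_s:\prod_t I_t\to I_s,\ \mathrm{pr}_s:\prod_t O_t\to O_s)$ is a morphism; and a cone $(h_{s,p},h_{s,O})_{s\in S}$ out of $X$ induces the mediating morphism $u=(k\mapsto(h_{s,p}(k))_s,\ q\mapsto(h_{s,O}(q))_s)$, which preserves the relations precisely because each $h_s$ does, and is unique at the level of sets. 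The empty product is the terminal object: the one‑player, one‑outcome game with identity relations. For the \textbf{equalizer} of $f=(f_p,f_O),g=(g_p,g_O):G\to G'$ I would take the subgame of $G$ on players $I_E=\{i\in I:f_p(i)=g_p(i)\}$ and outcomes $O_E=\{o\in O:f_O(o)=g_O(o)\}$ with all relations restricted (restrictions of reflexive/symmetric/transitive relations remain so); the inclusion pair is the equalizing morphism, and any $h:X\to G$ with $f\circ h=g\circ h$ factors uniquely through it since at set level $h_p$ lands in $I_E$ and $h_O$ in $O_E$, the factoring map inheriting the morphism property from $h$.

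For a \textbf{small coproduct} of $\{G_s\}_{s\in S}$ I would take player set $\coprod_{s\in S}I_s$ and outcome set $\coprod_{s\in S}O_s$, but here the construction genuinely differs from \gami: the relation indexed by a player $\langle i,s\rangle$ cannot be the naive disjoint union of the $R_{i,s}$, because it must be reflexive on \emph{all} of $\coprod_t O_t$ while a player coming from component $s$ carries no data about the other components. The correct choice is to relate $\langle o,t\rangle$ and $\langle p,t'\rangle$ by $R_{\langle i,s\rangle}$ iff $t=t'=s$ and $o\,R_{i,s}\,p$, \emph{or} $\langle o,t\rangle=\langle p,t'\rangle$; that is, $R_{\langle i,s\rangle}$ is $R_{i,s}$ together with the full diagonal of $\coprod_t O_t$, so a player from component $s$ sees the outcomes of the other components as isolated points, and similarly (with diagonal padding) for the preorders. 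This minimal choice is forced: any larger relation breaks the universal property, since the mediating map $v$ out of a cocone $(h_s)_s$ sends $\langle o,t\rangle$ to $h_{t,O}(o)$, a value about which the player $h_{s,p}(i)$ need satisfy no constraint. With this definition, reflexivity, symmetry and transitivity are immediate, the injections are morphisms, and $v=(\langle i,s\rangle\mapsto h_{s,p}(i),\ \langle o,s\rangle\mapsto h_{s,O}(o))$ is the unique mediating morphism. The empty coproduct is the initial object $\bm{\emptyset}$, with empty player and outcome sets.

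For the \textbf{coequalizer} of $f,g:G\to G'$ I would let $\approx$ be the equivalence relation on $J$ generated by the pairs $\langle f_p(i),g_p(i)\rangle$ and $\sim$ the one on $O'$ generated by $\langle f_O(o),g_O(o)\rangle$, and build the quotient game on $J/_\approx$ and $O'/_\sim$; for a class $[j]$ I would relate $[o']$ and $[p']$ by $R^Q_{[j]}$ iff $o'\,R'_{j'}\,p'$ for some $j'\in[j]$ and some representatives (automatically reflexive and symmetric, using $j'=j$ and reflexivity of $R'_j$), and for the preferences take the transitive closure of the analogous relation, just as a transitive closure already appeared in the \gami\ coequalizer. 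Then $[\cdot]=([\cdot]_p,[\cdot]_O)$ is a morphism and coequalizes $f,g$; given $m:G'\to G''$ with $m\circ f=m\circ g$, the set maps $m_p,m_O$ factor through the quotients, and the induced pair is a morphism because every $R^Q_{[j]}$‑ or $\preceq^Q_{[j]}$‑instance is witnessed by representatives $o',p',j'$ with $o'\,R'_{j'}\,p'$ (resp. a $\preceq'_{j'}$‑chain with all links having $j'\approx j$), which $m$ preserves, and $m_p(j')=m_p(j)$ since $j'\approx j$. I expect the main obstacle to be not any single hard step but exactly the coproduct: correctly indexing the accessibility and preference relations by players of the disjoint union, recognizing that reflexivity forces the diagonal on the foreign components, and confirming that this minimal choice is the one that validates the universal property; the coequalizer's transitive closure on quotiented preorders and the simultaneous quotient of the player set are the remaining spots where care is needed, the rest being routine.
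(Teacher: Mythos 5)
Your proposal is correct and follows essentially the same route as the paper: componentwise products and restricted-subgame equalizers, coproducts whose relations are padded with the diagonal on the foreign components (the same reflexivity subtlety the paper flags), and coequalizers obtained by simultaneously quotienting the player and outcome sets with a transitive closure on the quotiented preferences. No substantive differences to report.
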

\begin{proof}
To prove this claim, we must reproduce the constructions from the proof of Theorem~\ref{complete1}, but now in the context of \gam, where they differ from their counterparts in \gami\  due to the interaction between the components \( f_p \) and \( f_O \) of morphisms \( f \).
	
	\begin{itemize}
		\item \textbf{Small products}: given a set-indexed family of games $\mathcal{G} = \set{G_s}_{s\in S}$ with $G_s=\langle I_s, O_s, \{R_{i,s}\}_{i \in I_s}, \{\preceq _{i,s}\}_{i \in I_s} \rangle$, a product can be defined as 
		\[\prod _{s\in S}G_s  = \langle \prod_{s\in S} I_s, \prod_{s\in S}O_s, \{R^{\prod}_{\alpha}\}_{\alpha \in\prod I_s}, \{\preceq^{\prod}_{\alpha}\}_{\alpha \in \prod I_s} \rangle\]
		where for  of $\bar{o},\bar{p}\in \prod_{s\in S} O_s$ and $\alpha \in\prod_{s\in S} I_s$, we define 
     \[ \bar{o}R^{\Pi}_{\alpha} \bar{p} \text{ iff }  \bar{o}_s (R_{\alpha_s,s} )\bar{p}_s  \text{ for every } s\in S, \]
	 and similarly \[\bar{o}\preceq^{\Pi}_\alpha\bar{p} \text{ iff } \bar{o}_s\preceq_{\alpha_s,s} \bar{p}\text{ for every } s\in S.\]
	One can check that $R^{\prod}_i$ is reflexive and symmetric because every $R_{i,s}$ is a reflexive and symmetric relation. It also follows that  $\preceq_i^{\prod_I}$ is a reflexive and transitive relation.\\
	The natural projection functions $\pi_{sp}:\prod_{s\in S} I_s\to I_s$ and $\pi_{sO}:\prod_{s\in S}O_s\to O_s$  provide the projection morphisms from $\prod_{s\in S} G_s$ to $G_s$: if $\bar{o}R^{\Pi}_{\alpha} \bar{p}$ then by definition, $\bar{o}_s (R_{\alpha_s,s} )\bar{p}_s $ but this is to say that $\pi_{sO}(\bar{o}) (R_{\pi_{sp}(\alpha),s} )\pi_{sO}(\bar{p})$.\\
   The product satisfies the universal property that for any game \( X \) equipped with morphisms \( f_s: X \to G_s \) for each \( s \in S \), there exists a unique morphism \( u: X \to \prod_{s \in S} G_s \) such that \( \pi_s\circ u = f_s \) for all \( s \in S \).

				\item \textbf{Terminal object}: let $\mathbb{T} = \langle I_{\mathbb{T}}, O_{\mathbb{T}}, \{R^{\mathbb{T}}\}, \{\preceq^{\mathbb{T}}\} \rangle$ be the game in which $I_{\mathbb{T}}$ and  $O_{\mathbb{T}}$ are the singletons $\set{*_p}$ and $\set{*_O}$ respectively, while $R^{\mathbb{T}}$ and $\preceq^{\mathbb{T}}$, are the identity relation on $O_{\mathbb{T}}$, that is the set $\set{\pair{*_O,*_O}}$. Given any other game $G = \langle I, O, \{R_i\}_{i \in I}, \{\preceq _i\}_{i \in I} \rangle$ there are two unique functions, $!_{Gp}: I \rightarrow I_{\mathbb{T}}$ and  $!_{GO}: O\to O_{\mathbb{T}}$, and they determine the unique  game morphism from  $G$ to $\mathbb{T}$.
		
		\item \textbf{Equalizers}:  let $f$ and $g$ be two morphisms from $G$ to $G'$. Then we can define $E=\langle I_E,O_E,\{R_{i}^{E}\}_{i \in I_E}, \{\preceq _{i}^{E}\}_{i \in I_E}\rangle$, where $I_E=\set{i\in I:f_p(i)=g_p(i)}$, $O_E=\set{o\in O:f_O(o)=g_O(o)}$, and 
 for every  $i\in I_E$, $R_{i}^{E}=R_i|_{O_E}$, and $\preceq _{i}^{E}=\preceq _{i}|_{O_E}$. Since  $R_{i}^{E}$ is a restriction of $R_{i}$, it is also reflexive and symmetric, while $\preceq _{i}^{E}$, being a restriction of $\preceq_i$ it is also a reflexive and transitive relation for each $i \in I$. The equalizing morphism $e:E\to G$ is given by the inclusion maps $e_p:I_E\to I $ and $e_O:O_E\to O$.

		\item \textbf{Coproducts}: Let $Id_X$ be the set of pairs $\pair{x,x}$ for $x\in X$. Given games $G = \langle I, O, \{R_i\}_{i \in I}, \{\preceq _i\}_{i \in I} \rangle$ and $G' = \langle J, O', \{R'_j\}_{j \in J}, \{\preceq '_j\}_{j \in J} \rangle$, a natural way to derive a coproduct is to use 
		$$G+ G'  = \langle I+ J, O+ O', \{R^+_{k}\}_{k \in I+ J}, \{\preceq ^+_{k}\}_{k \in I+ J} \rangle$$
		where 
		\[R^+_i=R_i\cup Id_{O'} \text{ if } i\in I, \text{ and } R^+_j=R'_j\cup Id_O,\text{ if } j\in J\]
		and similarly, 
		\[\preceq^+_i=\preceq_i\cup Id_{O'} \text{ if } i\in I, \text{ and } \preceq^+_j=\preceq'_j\cup Id_O,\text{ if } j\in J.\]
		
        Here we need to add the identity relations to make sure both $R_k^{+}$ and $\preceq_k^+$ are reflexive. Thus, $R_k^{+}$ is a reflexive and symmetric relation since $R_i$ and $R_j$  for $i \in I$ and $j \in J$, as well as $Id_O$ and $Id_{O'}$ are reflexive and symmetric relations. In turn, and $\preceq_k^{+}$ is a reflexive and transitive relation, since $\preceq_i$, $\preceq_j$, $Id_O$ and $Id_{O'}$ are reflexive and transitive relations.
		
  Let $i_O$ and $i_{O'}$	be the injections from $O$ and $O'$ into $O+O'$ respectively.	To see that adding the identity relations on the outcomes preserves the universal property of the coproduct, consider morphisms $f:G\to G''$ and $g:G'\to G''$. We can build the set functions $[f,g]_p$ and $[f,g]_O$ as usual. If $\pair{x,y}\in R_k^+$, with $k\in I$, we consider two cases: 
  \begin{itemize}
 \item  $x,y\in O$, then $[f,g]_O(i_O(x))=f_O(x)$  and $[f,g]_O(i_O(y))=f_O(y)$ so, since $f$ is a morphism, $\pair{f_O(x),f_O(y)}\in R''_{f_p(k)}$.
 \item  $x,y\in O'$ then  we must have $x=y$ so $[f,g]_O(i_{O'}(x))=g_O(x)=g_O(y)=[f,g]_O(i_{O'}(y))$. In this case, we know that since $R''_{f_p(k)}$ is reflexive, $\pair{g_O(x),g_O(y)}\in R''_{f_p(k)}$.
\end{itemize}
 \[\xymatrix{
			O\ar[r]^{i_O}\ar[dr]_{f_O}	&O+ O'\ar@{-->}[d]^{[f,g]_O}	&O'\ar[l]_{i_{O'}}\ar[dl]^{g_O}\\
			&  O''					 	& \\
		}
		\] 
  Thus $i_G=(i_I,i_O):G\to G+G'$ and  $i_{G'}=(i_J,i_{O'}):G'\to G+G"$ are the inclusion morphisms, where $i_I$ and $i_J$ are the natural inclusion functions into $I+J$.
		\item \textbf{Small coproducts}: For a family of games indexed by a  \textit{set}  $S$: let $\mathcal{G}=\set{G_s}_{s\in S}$, with $G_s= \langle I_s, O_s, \{R_{i,s}\}_{i \in I}, \{\preceq _{i,s}\}_{i \in I} \rangle$. It is possible to construct a small coproduct by employing 
	\[\coprod_{s\in S} G_s  = \langle  \coprod_{s\in S} I_s,\coprod_{s\in S} O_s, \{R^{\coprod}_{\pair{i,s}}\}_{\pair{i,s} \in\coprod I}, \{\preceq^{\coprod_I}_{\pair{i,s}}\}_{\pair{i,s} \in \coprod I} \rangle.\]
	The players of this game are all the players of the individual games $G_s$. We can see them as elements of $\coprod_{s\in S} I_s$, of the form $\pair{i,s}$, with ${i}\in I_s$ for every $s\in S$, and the 
 corresponding relations on the coproduct are defined on the outcomes of the rest of the games as the minimal reflexive relation:
	\[ R^{\coprod}_{\pair{i,s}}=R_{i,s}+\coprod_{s'\neq s}Id_{O_{s'}}.\]
    Put another way, we have that 
    \[\pair{o,s} R^{\coprod}_{\pair{i,s''} }\pair{p,s'} \text{ iff } (s=s'=s'' \text{ and } oR_{i,s}p )\text{ or }(s=s'\text{ and } o=p),\]
     	 and similarly \[\pair{o,s}\preceq^{\coprod}_{\pair{i,s''}}\pair{p,s'} \text{ iff } (s=s'=s'' \text{ and } o\preceq_{i,s}p )\text{ or }(s=s'\text{ and } o=p).\]
 The definitions given above imply that the injections $i_{sp}:I_s\to \coprod_{s\in S}I_s$ given by $i_{sp}(i)=\pair{i,s}$ and $i_{sO}:O_s\to \coprod_{s\in S}O_s$ given by $i_{sO}(o)=\pair{o,s}$ give the corresponding inclusion morphisms $i_s=(i_{sp},i_{sO}):G_s\to \coprod_{s'\in S}G_{s'}$.
 
 The game $\coprod_{s\in S} G_s$ has the universal property that for any game $Y$, and morphisms $f_s:G_s\to Y$, there is a unique morphism $v:\coprod_{s\in S} G_s\to Y$ such that for every $s\in S$, $v\circ i_s=f_s$. 
		\item \textbf{Initial object}: {\it consider the empty game $\bm{\emptyset}$, where the set of players, the set of outcomes, and the corresponding families of relations are all the empty set. There exists a unique morphism from $\bm{\emptyset}$ to any game $G$.  Notice that $\bm{\emptyset}=G_p(\emptyset)=G_O(\emptyset)$.}
   
		\item \textbf{Coequalizers}: {\it if $f$ and $g$ are two morphisms from $G$ to $G'$ we must now consider an equivalence relation on the set $J$ of players of the game $G'$, generated by the pairs $\pair{f_p(i),g_p(i)}$ for all $i\in I$. We call this equivalence relation $\sim_p$. At the same time, we have the equivalence relation on $O'$ that we defined for coequalizers in \gami, which we now denote by $\sim_O$. The game 
				\[G'/_{\sim} =\langle
		J/_{\sim _p}, O'/_{\sim_O},  \{R'_{[j]}\}_{[j] \in J/_{\sim _p}}, \{\preceq'_{[j]}\}_{[j] \in J/_{\sim _p}}\rangle\]		
		where   $[{o}'] R'_{[j]} [p']$ if and only if there exist some $k \in  [j]$,   $q \in [o']$ and $r \in [p']$ such that  $q R'_{k} r$. Then, $R'_{[j]}$ is reflexive and symmetric, since $R'_{j}$ is reflexive and symmetric for every $j \in J$.\\ 
		
		In turn, for each $[j]$, $\preceq_{[j]}$ is defined as the smallest reflexive transitive relation on $O'/_{\sim}$ that contains $\preceq_k/{\sim_O}$ for each $k\in[j]$. The required morphism is $h=([\cdot],[\cdot]):G'\to G'/_{\sim}$. It is easy to check that $h$ preserves the relations $R'_{j}$ and $\preceq'_j$ for all $j\in J$.}
				 \qedhere
	\end{itemize}
 \end{proof}

 As a particular case we may remark on the construction of binary products:  given games $G = \langle I, O, \{R_i\}_{i \in I}, \{\preceq _i\}_{i \in I} \rangle$ and $G' = \langle J, O', \{R'_j\}_{j \in J}, \{\preceq '_j\}_{j \in J} \rangle$, to form a we proceed to let 
		\[G\times G'  = \langle I\times J, O\times O', \{R_{\pair{i,j}}\}_{\pair{i,j} \in I\times J}, \{\preceq _{\pair{i,j}}\}_{\pair{i,j} \in I\times J} \rangle\]
		where 
		\[\pair{o,o'} R_{\pair{i,j}}\pair{p,p'}  \text{ iff } o R_i p   \text{ and }  o' R'_j p' \]
		\noindent and
        \[\pair{o,o'} \preceq_{\pair{i,j}}\pair{p,p'} \text{ iff } o\preceq_i p\text{ and } o'\preceq_j p'.\]

 Each pair $\pair{i, j}$ can be seen as a coalition that must \textit{jointly evaluate} outcomes based on their respective preferences.  Since preferences are combined using conjunction, an outcome is preferred only if both players in the pair agree on it. 
 
\begin{example}
Consider a company where different departments, e.g. \textit{Finance} (F) and \textit{Marketing} (M) must jointly approve investment projects.  
 Each project has an outcome pair \(\pair{o_F, o_M}\), where \( o_F \) is how Finance evaluates the project and \( o_M \) is how Marketing evaluates it.   A project is only deemed preferable if \emph{both} departments find their respective components better than an alternative.  

This interpretation highlights how product games enforce coordinated decision-making through strict agreement conditions. 
\end{example}

\begin{definition}\label{defF_I} For each set of players $I$ we define a functor $F_I:\gam_I\to\gam$ by putting $F_I(\pair{O, \{R_i\}_{i \in I}, \{\preceq_i\}_{i \in I}})=\pair{I,O, \{R_i\}_{i \in I}, \{\preceq_i\}_{i \in I}}$ on the objects and $F_I(f)=(1_I,f)$ on the morphisms. 
\end{definition}

Notice that this functor does not preserve products or coproducts. For example, a product of games in \gami\ will have $I$ as its sets of players, but when performed in \gam, the number of players will be the square of the number of players in $I$. 

\begin{proposition}~\label{expGam}
	In $\mathbf{Gam}$, given games $G, G'$ there exists an {\em exponential object} $G'^G$. 
\end{proposition}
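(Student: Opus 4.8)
The plan is to mimic the construction of the exponential object in $\mathbf{Gam}_I$ from Proposition~\ref{exp}, but now tracking the extra "players" component of morphisms in $\mathbf{Gam}$. Given games $G = \langle I, O, \{R_i\}_{i\in I}, \{\preceq_i\}_{i\in I}\rangle$ and $G' = \langle J, O', \{R'_j\}_{j\in J}, \{\preceq'_j\}_{j\in J}\rangle$, the first task is to identify the right set of players and the right set of outcomes for $G'^G$. The set of outcomes should be the set of morphisms from $G$ to $G'$ in $\mathbf{Gam}$, i.e. pairs $(f_p, f_O)$ with $f_p: I\to J$ and $f_O: O\to O'$ satisfying the relation-preservation conditions of Definition~\ref{definitionMorphism}. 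For the players: since a morphism $h: G''\times G \to G'$ in $\mathbf{Gam}$ has a player-component $h_p: I''\times I \to J$, the currying $\psi_p: I'' \to J^I$ forces the player set of $G'^G$ to be $J^I$ (all functions from $I$ to $J$, not just those arising from actual morphisms). So I would set $G'^G = \langle J^I, \mathcal{O}, \{R''_\varphi\}_{\varphi\in J^I}, \{\preceq''_\varphi\}_{\varphi\in J^I}\rangle$ where $\mathcal{O}$ is the set of $\mathbf{Gam}$-morphisms $G\to G'$.

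Next I would define the relations, indexed now by a function $\varphi: I\to J$. For $f = (f_p,f_O)$ and $g = (g_p,g_O)$ in $\mathcal{O}$, declare $f R''_\varphi g$ iff for all $i\in I$ and all $o,p\in O$, $o R_i p$ implies $f_O(o) R'_{\varphi(i)} g_O(p)$; and (by analogy with $(*)$) $f \preceq''_\varphi g$ iff for all $i\in I$ and all $o\in O$, $f_O(o) \preceq'_{\varphi(i)} g_O(o)$. Then I would check reflexivity and symmetry of $R''_\varphi$ and that $\preceq''_\varphi$ is a preorder. Reflexivity of $R''_\varphi$ is subtler than in the fixed-player case: $f R''_\varphi f$ needs $o R_i p \Rightarrow f_O(o) R'_{\varphi(i)} f_O(p)$, which holds only if $\varphi$ agrees with $f_p$ on the "active" players — so in fact I expect to need to restrict $\mathcal{O}$ or be careful: the natural fix is that the evaluation morphism and currying only ever use the relation $R''_{f_p}$ at the outcome $f$, and reflexivity should be checked as $f R''_{f_p} f$, with the indexing set $J^I$ still needed to receive the player-component of the currying. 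I would state the relation-preservation for the evaluation map $eval: G'^G \times G \to G'$ with $eval_p: J^I\times I\to J$, $eval_p(\varphi,i)=\varphi(i)$, and $eval_O(f,o)=f_O(o)$, and verify it preserves $R$ and $\preceq$ using the definitions above.

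Then, given $h = (h_p,h_O): G''\times G\to G'$, define $\psi = (\psi_p,\psi_O): G''\to G'^G$ by $\psi_p(o'') $... wait, $\psi_p$ must go $I''\to J^I$, so $\psi_p(i'')(i) = h_p(i'',i)$, and $\psi_O(o'')(o) = (\,i\mapsto h_p(?,\,),\ o\mapsto h_O(o'',o)\,)$ — i.e. $\psi_O(o'') = (h_p(\text{---},\text{---})\text{ curried suitably},\ h_O(o'',\text{---}))$; I would spell out that $\psi_O(o'')$ is indeed a $\mathbf{Gam}$-morphism $G\to G'$ using that $h$ is a morphism, and that $\psi$ preserves the relations, and that $eval\circ(\psi\times \mathrm{id}_G) = h$ with uniqueness from the set-level currying. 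The main obstacle I anticipate is precisely the bookkeeping of the two indexing functions — making the definition of $R''_\varphi$, $\preceq''_\varphi$ compatible simultaneously with (a) $\psi_O(o'')$ being a genuine morphism, (b) $\psi$ preserving relations indexed by $I''$ via $\psi_p$, and (c) $eval$ preserving relations — since a naive choice can break reflexivity of the $R''_\varphi$ or fail to make $eval$ a morphism. Once the correct definitions are pinned down, each verification is a short diagram chase analogous to the proof of Proposition~\ref{exp}.
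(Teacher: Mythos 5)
Your overall strategy (curry exactly as in Proposition~\ref{exp}, while tracking the player component) is the same as the paper's, but the construction is never pinned down, and the two points where you hesitate are precisely where it breaks. First, you take the outcomes of $G'^G$ to be full morphisms, i.e.\ pairs $(f_p,f_O)$. Then the curried map on outcomes, $\psi_O(o'')$, has no well-defined player component --- your own ``$h_p(?,\cdot)$'' --- because $h_p$ depends on a player of $G''$, not on the outcome $o''$; moreover, since commutation of $eval\circ(\psi\times \mathrm{id}_G)=h$ only constrains the $O\to O'$ component, any admissible choice of that player component would give another fill-in, threatening the uniqueness required by the universal property. The paper avoids this by splitting the data: the outcome set $\mathcal{O}$ of $G'^G$ consists only of the functions $\rho:O\to O'$ that occur as outcome-components of some morphism $G\to G'$, and the player set $K$ consists of the functions $\kappa:I\to J$ that occur as player-components; then $\Psi_O(o'')=h_O(o'',-)$ and $\Psi_p(l)=h_p(l,-)$ are forced, and uniqueness follows componentwise from set-level currying. (Your claim that currying ``forces'' the player set to be all of $J^I$ is not right: it only forces each $h_p(l,-)$ to be a player of $G'^G$, and the paper checks that $(\Psi_p(l),\Psi_O(o''))$ is a morphism, so $K$ suffices.)

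Second, reflexivity of $R''_\varphi$: you correctly observe that $fR''_\varphi f$ can fail when $\varphi$ is incompatible with $f$, but your proposed remedy --- ``check reflexivity as $fR''_{f_p}f$'' --- does not meet the actual requirement: for $G'^G$ to be an object of $\mathbf{Gam}$ at all, \emph{every} relation indexed by \emph{every} player of $G'^G$ must be reflexive on the whole outcome set. The paper's fix is explicit and different: $R_\kappa$ is defined as the set of pairs $(\rho,\rho')$ such that both $(\kappa,\rho)$ and $(\kappa,\rho')$ are morphisms and $oR_ip$ implies $\rho(o)R'_{\kappa(i)}\rho'(p)$, \emph{united with} $Id_{\mathcal{O}}$, so reflexivity holds by construction and symmetry is checked as in Proposition~\ref{exp}; the preference $\preceq_\kappa$ is reflexive automatically because each $\preceq'_j$ is. Until you commit to definitions of this kind, the remaining verifications (that $\Psi_O(o'')$ lies in $\mathcal{O}$, that $\Psi$ and $eval$ are morphisms, and uniqueness) cannot be carried out, so the proposal as written has a genuine gap rather than being a complete alternative route.
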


\begin{proof}: Given two games $G = \langle I,O,\{R_{i}\}_{i \in I}, \{\preceq _{i}\}_{i \in I}\rangle$ and $G' = \langle J, O', \{R'_j\}_{j \in J}, \{\preceq '_j\}_{j \in J} \rangle$, we define a game:
$$G'^{G} = \langle K, \mathcal{O}, \{R_{\kappa}\}_{\kappa \in K}, \{\preceq _{\kappa}\}_{\kappa \in K}\rangle$$
where 
$$K=\{\kappa : I \rightarrow J| \mbox{ there exists } \rho:O\to O' \text{ such that }\ (\kappa, \rho) \in {Hom}_{\mathbf{Gam}}(G, G')\} $$
and 
\[\mathcal{O} = \{\rho: O \rightarrow O' | \mbox{ there exists } \kappa:I\to J \text{ such that } 
(\kappa, \rho) \in {Hom}_{\mathbf{Gam}}(G, G')\}.\]

For each $\kappa \in K$,
\[{R}_{\kappa}= \{(\rho, \rho'): (\kappa, \rho), (\kappa,\rho') \in \mbox{Hom}_{\mathbf{Gam}}(G, G') \ \mbox{and for all} \ o, p \in O \ \mbox{and for all} \ i \in I\]
\[ oR_ip \text{ implies } \rho(o) R'_{\kappa(i)} \rho'(p) \} \cup Id_{\mathcal{O}}.\]

With this specification, $R_{\kappa}$ is trivially reflexive and we can check that it is symmetric as well:
Suppose that $(\rho, \rho') \in {R}_{\kappa}$. By definition, for all $o,p\in O$ and $i\in I$, if $oR_ip$, then, since $R_i$ is symmetric, $pR_io$ and thus $\rho(p)R'_{\kappa(i)}\rho'(o)$. By the symmetry of $R'_{\kappa(i)}$, $\rho'(o)R'_{\kappa(i)}\rho(p)$, and this proves that $\rho'{R}_{\kappa} \rho$.

In the case of the preference relations, for each $\kappa \in K$, $\rho\preceq_\kappa \rho'$ if and only if for all $o \in O$ and  $i \in I$, $\rho(o) \preceq_{\kappa(i)} \rho'(o)$.

While the reflexivity of $\preceq_{\kappa}$ is trivially satisfied, transitivity can be proven as follows. Assume that $\rho \preceq_{\kappa} \rho'$ and $\rho' \preceq_{\kappa} \rho''$. This means that for all $i \in I$ and all $o \in O$, $\rho(o) \preceq_{\kappa(i)} \rho'(o)$ and $\rho'(o) \preceq_{\kappa(i)} \rho''(o)$. Since $\preceq_{\kappa(i)}$ is transitive for each $i \in I$, it follows that $\rho \preceq_{\kappa} \rho''$. Thus, $\preceq_{\kappa}$ is a preorder.

{For any other game $G'' = \langle L, O'', \{S_l\}_{l \in L}, \{\preceq_l\}_{l \in L}\rangle$, given a morphism $h: G^{''} \times G \rightarrow G'$, we  define maps  $\Psi_p$ and $\Psi_O$  as follows:  given $l\in L$, $i \in I$, $o''\in O''$, and $o \in O$,}
\begin{itemize}
\item {$\Psi_p(l)(i) =h_p(l,i) $}
\item {$\Psi_O(o'')(o) = h_O(o'',o)$,}
\end{itemize}

{We need to check that for every $l \in L$ and $o''\in O''$, $(\Psi_p(l), \Psi_O(o''))$ is a morphism from $G$ to $G'$, and then that $\Psi=(\Psi_p,\Psi_O)$ itself is a morphism from $G''$ to $G'^G$.}

To see the first part, fix $l \in L$ and $o''\in O''$, and suppose that $o,p\in O$ are such that $oR_ip$. Then, as $\pair{o'',o} (S_l{\times}R_i)\pair{o'',p}$, it follows that $h({o'',o}) R'_{h_p(l,i)}h({o'',p})$, that is, $\Psi_{O}(o'')(o) R'_{\Psi_p(l)(i)} \Psi_{O}(o'')(p)$.  Similarly, we can check that the preferences are preserved.

{To show that $\Psi$ is a morphism, take $l \in L$ and $o'',p''\in O''$ such that $o''S_lp''$. For any $o, p\in O$ satisfying $oR_ip$,  we have that $\pair{o'',o} (S_l{\times}R_i)\pair{p'',p}$, and thus $h({o'',o}) R'_{h_p(l,i)} h({p'',p})$ so  $\Psi_O(o'')(o) R'_{\Psi_p(l)(i)} \Psi_O(p'')(p)$. This implies that $\Psi_O(o'') R_{\Psi_p(l)} \Psi_O(p'')$. }
 
Thus $\Psi$ is a morphism that makes the following diagram commute, and its uniqueness follows from its definition on the corresponding sets.
$$\xymatrix{
	G''\ar@{-->}[d]_{\Psi} & 	G'' \times G \ar[d]_{\Psi \times id_G} \ar[rrd]^{\mathbf{h}} & & \\
	G'^{G}	&  G'^{G} \times G \ar[rr]_{eval} &  & G' }$$
\qedhere
\end{proof}

Analogously to what we saw in Proposition \ref{injectionExp}, we have the following result:

 \begin{proposition}\label{embeddingGam}
    If $G\neq\bm{\emptyset}$, then there is an embedding $\psi$ of $G'$ in $G'^G$.
\end{proposition}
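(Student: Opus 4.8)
The strategy is to imitate the construction in the proof of Proposition~\ref{injectionExp}, now carrying along the player component that $\mathbf{Gam}$-morphisms have. Write $G=\langle I,O,\{R_i\}_{i\in I},\{\preceq_i\}_{i\in I}\rangle$ and $G'=\langle J,O',\{R'_j\}_{j\in J},\{\preceq'_j\}_{j\in J}\rangle$, and recall from the proof of Proposition~\ref{expGam} that $G'^G=\langle K,\mathcal{O},\{R_\kappa\}_{\kappa\in K},\{\preceq_\kappa\}_{\kappa\in K}\rangle$, where $K$ is the set of functions $I\to J$ and $\mathcal{O}$ the set of functions $O\to O'$ that occur as the two components of some morphism $G\to G'$. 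For $j\in J$ let $\bm{j}\colon I\to J$ be the constant function with value $j$, and for $o'\in O'$ let $\bm{o'}\colon O\to O'$ be the constant function with value $o'$; define $\psi=(\psi_p,\psi_O)$ by $\psi_p(j)=\bm{j}$ and $\psi_O(o')=\bm{o'}$.

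\textbf{Steps.} First I would check that these maps actually land in $K$ and $\mathcal{O}$: for any $j\in J$ and $o'\in O'$ the pair $(\bm{j},\bm{o'})$ is a morphism $G\to G'$, since the only thing to verify is that $oR_ip$ forces $\bm{o'}(o)R'_{\bm{j}(i)}\bm{o'}(p)$, i.e. $o'R'_jo'$, which holds by reflexivity of $R'_j$, and likewise $o\preceq_ip$ forces $o'\preceq'_jo'$ by reflexivity of $\preceq'_j$; hence $\bm{j}\in K$ and $\bm{o'}\in\mathcal{O}$. Next I would verify that $\psi$ is a morphism $G'\to G'^G$: given $o',p'\in O'$ and $j\in J$ with $o'R'_jp'$, unwinding the definition of $R_{\bm{j}}$ one must show that for all $o,p\in O$ and all $i\in I$, $oR_ip$ implies $\bm{o'}(o)R'_{\bm{j}(i)}\bm{p'}(p)$, and this is precisely $o'R'_jp'$; so $\bm{o'}R_{\bm{j}}\bm{p'}$, and the argument for $\preceq$ is identical (using the simpler defining clause of $\preceq_{\bm{j}}$). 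Finally, $\psi_p$ is injective as soon as $I\neq\emptyset$ and $\psi_O$ is injective as soon as $O\neq\emptyset$, since distinct constants on a nonempty domain differ; as $G\neq\bm{\emptyset}$ at least one holds, and when $G$ has both a player and an outcome $\psi$ is injective on both components, which is the desired embedding (the remaining degenerate-but-nonempty shapes of $G$, a pure set of players or a pure set of outcomes, require only the obvious adjustment, replacing the component of $\psi$ with trivial domain by the unique map into the corresponding singleton).

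\textbf{Main obstacle.} The verifications that $(\bm{j},\bm{o'})$ is a morphism and that $\psi$ preserves both relations are short and use nothing beyond reflexivity of the relations of $G'$ — indeed, reflexivity is exactly what makes constant maps into morphisms, so it is the hypothesis that cannot be dropped there. The one point that genuinely needs care, and where the assumption $G\neq\bm{\emptyset}$ is used, is injectivity: a constant map on the empty set is not determined by its value, so one must track which component of $G$ is nonempty. I expect this bookkeeping, rather than any categorical difficulty, to be the only delicate part; everything else transfers essentially verbatim from Proposition~\ref{injectionExp}.
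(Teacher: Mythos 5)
Your construction — constant maps $\psi_p(j)=\bm{j}$, $\psi_O(o')=\bm{o'}$, membership in $K$ and $\mathcal{O}$ via reflexivity of $R'_j$ and $\preceq'_j$, and the unwinding of $R_{\bm{j}}$ and $\preceq_{\bm{j}}$ to reduce preservation to $o'R'_jp'$ — is exactly the paper's proof. The only difference is your added bookkeeping about injectivity and the degenerate nonempty shapes of $G$, a point the paper's proof simply does not address; that extra care is fine (and arguably more honest about what ``embedding'' requires) but does not change the argument.
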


\begin{proof}
Let  $\psi_p:J\to J^I$ and $\psi_O:O'\to O'^O$, where for every $i\in I$  and $j\in J$, $\psi_p (j)(i)=j$, and  for every $o\in O$  and $o'\in O'$, $\psi_O (o')(o)=o'$. We denote with $\bm{j}$ and $\bm{o'}$ these constant functions.  Furthermore, notice that for any $\kappa:I\to J$, the pair $\pair{\kappa,\bm{o'}}$ is a morphism from $G$ to $G'$. Indeed, if $oR_ip$, then $\bm{o'}(o)R'_{\kappa(i)}\bm{o'}(p)$ for any $o,p\in O$ and $i\in I$.  This proves that in particular $(\bm{j},\bm{o}')$ is a morphism, so $\bm{j}\in K$ and $\bm{o'}\in \mathcal{O}$. 

Finally, we can show that $\psi=(\psi_p,\psi_O)$ is a morphism. Consider $o',p'\in O'$ such that $o'R'_jp'$ for some $j\in J$. We want to show that $\psi_O(o')R_{\psi_p(j)}\psi_O(p')$. By definition, this means that $\pair{\bm{o'},\bm{p'}}\in R_{\bm{j}}$. We already know that  $(\bm{j},\bm{o'})$ and $(\bm{j},\bm{p'})$ are morphisms from $G$ to $G'$, and for all $i\in I$ and $o,p\in O$, $\pair{o,p}\in R_i$ it should be the case that  $\bm{o'}(o)R'_{\bm{j}(i)}\bm{p'}(p)$, but this is equivalent to the hypothesis $o'R'_jp'$.
\end{proof}

The following proposition illustrates the flexibility and expressivity of this formalism: 

\begin{proposition}
    Each set function $f:I\to J$ induces a functor $F_f:\gam_I\to\gam_J$ such that for every set of outcomes $O$ of a game in $\gam_I$, $(f,1_O)$ is a morphism in \gam.
\end{proposition}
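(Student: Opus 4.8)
The plan is to define $F_f$ by \emph{pushing forward} the accessibility and preference relations of a game along the fibres of $f$, leaving the outcome set untouched, and then to check functoriality and the morphism claim essentially by unwinding definitions. On objects, given $G=\langle O,\{R_i\}_{i\in I},\{\preceq_i\}_{i\in I}\rangle$ in $\gam_I$, I would set $F_f(G)=\langle O,\{R^{f}_{j}\}_{j\in J},\{\preceq^{f}_{j}\}_{j\in J}\rangle$, where for each $j\in J$ I take $R^{f}_{j}=Id_O\cup\bigcup_{i\in f^{-1}(j)}R_i$ and $\preceq^{f}_{j}=\big(Id_O\cup\bigcup_{i\in f^{-1}(j)}\preceq_i\big)^{t}$, the transitive closure. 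The first things to check are that $R^{f}_{j}$ is reflexive and symmetric — it contains the diagonal and is a union of symmetric relations — and that $\preceq^{f}_{j}$ is a preorder, which is immediate since it contains $Id_O$ and is transitively closed by construction; the transitive closure is genuinely needed, as a union of preorders need not be transitive. Note that adjoining $Id_O$ only matters when $f^{-1}(j)=\emptyset$, since otherwise each $R_i$ (resp.\ $\preceq_i$) already contains the diagonal.

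On morphisms, if $\phi:G\to G'$ is a morphism in $\gam_I$, I set $F_f(\phi)=\phi$, the same underlying function on outcomes, which is legitimate because $F_f$ does not change the outcome set. To see that $\phi$ preserves $R^{f}_{j}$, I split into the diagonal case, where $\phi(o)=\phi(p)$, handled by reflexivity of $R^{f}_{j}$ in $F_f(G')$, and the case $oR_ip$ with $i\in f^{-1}(j)$, where $\phi(o)R_i\phi(p)$ because $\phi$ is a $\gam_I$-morphism, hence $\phi(o)R^{f}_{j}\phi(p)$. For the preferences, a witness of $o\preceq^{f}_{j}p$ is a finite chain through $Id_O\cup\bigcup_{i\in f^{-1}(j)}\preceq_i$; applying $\phi$ edgewise yields a chain through the corresponding generating relation of $F_f(G')$, and transitivity of $\preceq^{f}_{j}$ in $F_f(G')$ then gives $\phi(o)\preceq^{f}_{j}\phi(p)$. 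Functoriality is then immediate: $F_f(1_G)=1_{F_f(G)}$ and $F_f(\psi\circ\phi)=F_f(\psi)\circ F_f(\phi)$ because these already hold for the underlying outcome functions, and domains and codomains match since $F_f$ fixes outcome sets.

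Finally, for the morphism claim, I view $G$ in $\gam$ via the functor $F_I$ of Definition~\ref{defF_I} and $F_f(G)$ via $F_J$, so that both have outcome set $O$, and consider the pair $(f,1_O)$. For $i\in I$: if $oR_ip$ then $o\,R^{f}_{f(i)}\,p$ because $R_i\subseteq\bigcup_{i'\in f^{-1}(f(i))}R_{i'}\subseteq R^{f}_{f(i)}$, and if $o\preceq_i p$ then $o\preceq^{f}_{f(i)}p$ because $\preceq_i$ is among the relations generating $\preceq^{f}_{f(i)}$. Thus $(f,1_O)$ meets the conditions of Definition~\ref{definitionMorphism}. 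One can also check that $(f,1_O)$ is natural in $G$, though this is not needed for the statement. I expect the only mildly delicate point to be the preservation of $\preceq^{f}_{j}$ under $F_f(\phi)$, where one must argue via the chain rather than relation-by-relation because of the transitive closure; once that is in place, everything else is bookkeeping about diagonals and fibres.
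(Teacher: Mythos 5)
Your proof is correct and takes essentially the same route as the paper's: push the relations forward along the fibres of $f$, let $F_f$ act as the identity on the underlying outcome functions, and verify that $(f,1_O)$ is a \gam-morphism via the functors $F_I$ and $F_J$. You are in fact somewhat more careful than the paper, which defines $oR^f_jp$ only as ``$oR_ip$ for some $i$ with $f(i)=j$'' (not reflexive when $f^{-1}(j)=\emptyset$) and never spells out the pushed-forward preference relations; your adjoined diagonal, transitive closure, and chain argument address exactly these points.
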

\begin{proof}
    For each $G=\pair{O, \{R_i\}_{i \in I}, \{\preceq_i\}_{i \in I}}\in\gam_I$, we put $F_f(G)=\pair{ O, \set{R^f_j}_{j\in J},\set{\preceq^f_j}_{j\in J}}\in\gam_J$, where \[oR^f_jp \text{ iff there exists }i\in I \text{ such that } f(i)=j \text{ and }oR_ip.\]
    For a morphism $g:G\to G'$ in \gam$_I$, $F_f(g):F_f(G)\to F_f(G')$ is given by the same function $g:O\to O'$. We can check that $F_f(g)$ is a morphism: if $oR^f_jp$, then for some  $i\in I$ such that $f(i)=j$,  $oR_ip$. Therefore, $g(o)R'_ig(p) $, so $g(o)R'^f_jg(p)$.  It follows from the definition that $F_f$ preserves compositions and identities.

    If we now consider a game $G$ in \gami, we have that $F_f(G)$ is in \gam$_J$, and we can apply the functors $F_I$ and $F_J$ (see Definition \ref{defF_I}), respectively. The pair $(f,1_O):F_I(G)\to F_J(F_f(G))$ is a morphism in \gam: if $oR_ip$, then $oR^f_{f(i)}p$.
\end{proof}

It is well known that if a category has coproducts and coequalizers, then it has pushouts. The following example shows an application of this construction.

\begin{example}\label{examplePushout}
    Consider these two games $G=\pair{\set{1,2}, O,\set{R_1,R_2},\set{\preceq_1,\preceq_2}}$ and $G'=\pair{\set{2,3}, O',\set{R'_2,R'_3},\set{\preceq'_2,\preceq'_3}}$. Since player $2$ participates in both games, we want to amalgamate $G$ and $G'$ in a way that reflects this situation. For this, we use the player game $G_p(2)=\pair{\set{2}, \emptyset, \set{\emptyset},\set{\emptyset}}$, and the morphisms $f$ from $G_p(2)$ to $G$ and $f'$ from $G_p(2)$ to $G'$ given by proposition \ref{playerGame}. 

    The pushout of $G$ and $G'$ is obtained by building their coproduct and then finding the quotient under the relation $\sim_2$ that identifies player 2 in each game.
\[
\xymatrix{
 &G_p(2)\ar[dl]_f\ar[dr]^{f'}&\\
G\ar[dr]_{i_G}&&G'\ar[dl]^{i_{G'}}\\
&G+G'\ar[d]&\\
&G+G'/_{\sim_2}&
}
\]
The set of players of $G+G'$ can be written as $\set{\pair{1,0},\pair{2,0},\pair{2,1},\pair{3,1}}$ (the second component in each pair indicates to which game corresponds to: $0$ for $G$ and $1$ for $G'$). The relation $\sim_2$ identifies $\pair{2,0}$ and $\pair{2,1}$.

We can now describe the components of this pushout $G+G'/_{\sim_2}$. The set of players is composed by the equivalence classes $[\pair{1,0}]$, $[\pair{2,0},\pair{2,1}]$, $[\pair{3,1}]$.  The set of outcomes is $O+O'$ since there is no restriction imposed by the morphisms from $G_p(2)$.
The  accessibility relations are: $R^+_{[\pair{1,0}]}=R^+_{\pair{1,0}}=R_1\cup Id_{O'}$,
$R^+_{[\pair{3,1}]}=R^+_{\pair{3,1}}=Id_{O}\cup R'_3$, and for $R^+_{[\pair{2,0},\pair{2,1}]}$ we have that $\pair{x,y}\in R^+_{[\pair{2,0},\pair{2,1}]}$ if and only if $\pair{x,y}\in R^+_{\pair{2,0}}$ or $\pair{x,y}\in R^+_{\pair{2,1}}$. That is, if and only if $\pair{x,y}\in R_2+R'_2$.

Analogously, $\preceq^+_{[\pair{1,0}]}=\preceq_1\cup Id_{O'}$, $\preceq^+_{[\pair{3,1}]}=Id_{O}\cup \preceq'_3$, and $\preceq^+_{[\pair{2,0},\pair{2,1}]}=\preceq_2+\preceq'_2$. 

Notice that in the new game players $1$ and $3$ keep their original preferences and accessibility relations on the outcomes of their games, and have trivial ones over those of the other one. On the other hand, player $2$ extends their relations to include both of the games in which they participated.
    \end{example}

\section{Subgames and equilibria}~\label{equilibria}

As usual, we can say that $G$ is a subgame of a game $G'$ if the inclusion map is a morphism in the category \gam, or \gami.\footnote{Notice that this is {\em not} the meaning of {\em subgame} in the Game Theory literature (see \cite{osborne94course}).} We can spell out what this means in detail:

\begin{definition}
    In \gami, a  game $G=\pair{O, \{R_i\}_{i \in I}, \{\preceq_i\}_{i \in I}}$ is a subgame of $G'=\pair{O', \{R'_i\}_{i \in I}, \{\preceq'_i\}_{i \in I}}$ if $O\subseteq O'$ and for each $i\in I$, $R_i$ is a subset of the restriction of $R'_i$ to $O$, while each $\preceq_i$ is a preorder on $O$ that is included in $\preceq'_i$. 

     In \gam, a  game $G=\pair{I,O, \{R_i\}_{i \in I}, \{\preceq_i\}_{i \in I}}$ is a subgame of $G'=\pair{J,O', \{R'_j\}_{j \in J}, \{\preceq'_j\}_{j \in J}}$ if $I\subseteq J$, $O\subseteq O'$ and for each $i\in I$, $R_i$ is a subset of the restriction of $R'_i$ to $O$, while each $\preceq_i$ is a preorder on $O$ that is included in $\preceq'_i$. 
\end{definition}

	One of the main goals of Game Theory is to postulate {\em solution concepts} for games, indicating what results should be expected if players exhibit different ways of making decisions \cite{osborne94course}.
 In the case of games represented by multi-graphs we capture the notion of a solution as a selection of a subgame generated by the set of outcomes on which we keep all the relations of accessibility and preferences among those outcomes: 
	
	\begin{definition}
		A {\em solution concept} is a mapping $\phi$ such that given a game $G = \langle I, O, \{R_i\}_{i \in I}, \{\preceq_i\}_{i \in I} \rangle$ yields a game 
		$$\phi(G) = \langle I, O_{\phi}, \{R^{\phi}_i\}_{i \in I}, \{\preceq^{\phi}_i\}_{i \in I} \rangle$$
 such that $O_{\phi} \subseteq O$ and for each $i \in I$, $R^{\phi}_i$ and $\preceq^{\phi}_i$ are the restrictions of $R_i$ and $\preceq_i$ to $O_{\phi}$.
	\end{definition}
Notice that the set of players of $\phi(G)$ is the same as  in $G$, so this definition works both in \gami\ and \gam. The game $\phi(G)$ is the graph induced by the selected outcomes. The definition of $\phi$ captures which outcomes can be recommended (under some criterion).

 The literature presents several alternative solution concepts, but the most widely used one is {\em Nash equilibrium}, which we adapt to our representation of games:
 
\begin{definition}
      An outcome $o^* \in O$  is a \emph{Nash equilibrium} if  for each $i$, and for each $p$ such that $\langle o^*, p \rangle \in R_i$, $p \preceq_i o^*$. 

      For each game $G=\pair{I,O, \{R_i\}_{i \in I}, \{\preceq_i\}_{i \in I}}$, then, $\phi^{NE}(G)$ is the subgame of $G$    with the same set of players $I$, the set of outcomes $O_{\phi^{NE}}\subseteq O$ of all the Nash equilibria in the game, and the restrictions of $R_i$ and $\preceq_i$ to $O_{\phi^{NE}}$. 

      For simplicity, we will write $o^*\in\phi^{NE}(G)$ instead of $o^*\in O_{\phi^{NE}(G)}$.
\end{definition}

 The concept of a Nash equilibrium captures a stability condition where no player has an incentive to deviate, assuming others do not change their choices. The definition of a weak Nash equilibrium relaxes this condition by allowing for situations where a player may be indifferent about switching  to an alternative outcome. While every Nash equilibrium is a weak Nash equilibrium, the reverse is not necessarily true.

\begin{definition}
      An outcome $o^* \in O$  is a \emph{weak Nash equilibrium} if, for each player $i$, whenever there exists an alternative $p$ satisfying $\langle o^*, p \rangle \in R_i$, it holds that either $p \preceq_i o^*$ or $o^*$ is not  dominated by $p$ according to $\preceq_i$.

      Given a game $G=\pair{I,O, \{R_i\}_{i \in I}, \{\preceq_i\}_{i \in I}}$, we define $\phi^{wNE}(G)$ as the subgame of $G$ where the set of players remains $I$, the set of outcomes $O_{\phi^{wNE}}$ consists of all weak Nash equilibria, and the relations $R_i$ and $\preceq_i$ are the corresponding restrictions to the set $O_{\phi^{wNE}}$. 
\end{definition}

In the case of {\bf Gam}$_{\mathbf{1}}$, the notion of Nash equilibrium for a decision problem consists of a subgame in which each outcome $o^* \in O_{\phi}$ is an {\em optimal solution}. That is, for every $p \in O$, if $o^* R_\mathbf{1} p$, then $p \preceq_\mathbf{1} o^*$. An optimal solution is actually a maximal element of $\preceq_\mathbf{1}$ among those accessible through $R_\mathbf{1}$.

\begin{example}~\label{BoS} 		Consider a game $G_{BoS}$ which can be seen as a multi-graph representation of the following strategic game (known as the {\it Battle of the Sexes}):
		
			\begin{center}
			\begin{tabular}{l|c|c}
				&   C   &   D   \\ \hline
				A & (2,1) & (0,0) \\ \hline
				B & (0,0) & (1,2)
			\end{tabular}
		\end{center}
		 in $G_{BoS}$, $I = \{1, 2\}$ and $O = \{(A,C), (B,D), o\}$, where, as in the second part of Example~\ref{off}, $o$ is an outcome that identifies $(A,D)$ and $(B,C)$. The accessibility and preference relations are represented in Figure~\ref{figNT}. Notice the difference with Figure~\ref{figNT2}, in which the preferences of both players (represented by dashed curves) go in {\em different} directions between $(D,L)$ and $o$ and $(T,R)$ and $o$. Here, instead, the preferences of both players are the {\it same} between $o$ and $(B,D)$ and between $o$ and $(A,C)$.

		We can see (recall that the accessibility relations are not transitive) that both $(A,C), (B,D) \in {\phi^{NE}(G_{BoS})}$.
		
			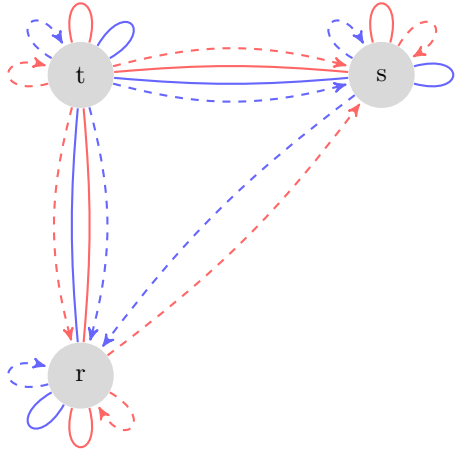
\begin{figure}[h!]
			\centering
			
			\begin{tikzpicture}[scale=1,
			DashedR/.style={
				dashed,
				->,>=stealth',
				shorten >=1pt,
				auto,
				thick,
				draw=red!60},
			DashedB/.style={
				dashed,
				->,>=stealth',
				shorten >=1pt,
				auto,
				thick,
				draw=blue!60},
			FillR/.style={
				thick,
				draw=red!60},
			FillB/.style={
				thick,
				draw=blue!60}]
			
			\tikzstyle{every state}=[fill=gray!30!white,draw=none]
			
			\coordinate (A) at (0,0);
			\coordinate (C) at (4,4);
			\coordinate (D) at (0,4);
			
			\node[state] (N1) at (A) {AC};
			\node[state] (N3) at (C) {BD};
			\node[state] (N4) at (D) {o};
			
			\path
			(N1) edge [FillB, bend left = 5] (N4)
			(N4) edge [FillR, bend left = 5] (N1)
			(N4) edge [DashedR, bend right = 15] (N1)
			(N4) edge [DashedB, bend right = -15] (N1)
			
			(N3) edge [FillB, bend left = 5] (N4)
			(N3) edge [FillR, bend left = -5] (N4)
			(N4) edge [DashedR, bend left = 15] (N3)
			(N4) edge [DashedB, bend left = -15] (N3)
			
			(N1) edge [DashedR, bend right = 8] (N3)
			(N3) edge [DashedB, bend right = 8] (N1);
			
			\draw[DashedB](N1) to [out=195,in=165,looseness=8] (N1);
			\draw[FillB](N1) to [out=240,in=210,looseness=8] (N1);
			\draw[FillR](N1) to [out=285,in=255,looseness=8] (N1);
			\draw[DashedR](N1) to [out=330,in=300,looseness=8] (N1);
			
			\draw[DashedR](N3) to [out=60,in=30,looseness=8] (N3);
			\draw[FillR](N3) to [out=105,in=75,looseness=8] (N3);
			\draw[DashedB](N3) to [out=150,in=120,looseness=8] (N3);
			\draw[FillB](N3) to [out=375,in=345,looseness=8] (N3);
			
			\draw[FillB](N4) to [out=60,in=30,looseness=8] (N4);
			\draw[FillR](N4) to [out=105,in=75,looseness=8] (N4);
			\draw[DashedB](N4) to [out=150,in=120,looseness=8] (N4);
			\draw[DashedR] (N4) to [out=195,in=165,looseness=8] (N4);
			
			\end{tikzpicture}
			
			\caption{Multi-graph of $G_{BoS}$. Blue and red lines correspond to players 1 and 2, respectively. Full lines correspond to accessibility relations and dashed ones to preferences.}
			\label{figNT}
		\end{figure}

	\end{example}

\begin{example}
Consider a game represented by the multi-graph in the figure below. 
\begin{minipage}{7cm}
			\begin{tikzpicture}[scale=1,
			DashedR/.style={
				dashed,
				->,>=stealth',
				shorten >=1pt,
				auto,
				thick,
				draw=red!60},
                DashedBB/.style={
				dashed,
				<->,>=stealth',
				shorten >=1pt,
				auto,
				thick,
				draw=blue!60},
			DashedB/.style={
				dashed,
				->,>=stealth',
				shorten >=1pt,
				auto,
				thick,
				draw=blue!60},
			FillR/.style={
				thick,
				draw=red!60},
			FillB/.style={
				thick,
				draw=blue!60}]
			
			\tikzstyle{every state}=[fill=gray!30!white,draw=none]
			
			\coordinate (A) at (0,0);
			\coordinate (C) at (4,4);
			\coordinate (D) at (0,4);
			
			\node[state] (N1) at (A) {q};
			\node[state] (N3) at (C) {o};
			\node[state] (N4) at (D) {p};
			
			\path
			(N1) edge [FillR, bend right = 0] (N4)
			
			(N3) edge [FillB, bend left = 0] (N4)
			(N4) edge [DashedBB, bend left = 15] (N3)
			
			(N1) edge [DashedR, bend right = 8] (N3)
                (N1) edge [FillR, bend right = 3] (N3)
                (N1) edge [FillB, bend left = 3] (N3) 
			(N1) edge [DashedB, bend left = 8] (N3);
			
			\draw[DashedB](N1) to [out=195,in=165,looseness=8] (N1);
			\draw[FillB](N1) to [out=240,in=210,looseness=8] (N1);
			\draw[FillR](N1) to [out=285,in=255,looseness=8] (N1);
			\draw[DashedR](N1) to [out=330,in=300,looseness=8] (N1);
			
			\draw[DashedR](N3) to [out=60,in=30,looseness=8] (N3);
			\draw[FillR](N3) to [out=105,in=75,looseness=8] (N3);
			\draw[DashedB](N3) to [out=150,in=120,looseness=8] (N3);
			\draw[FillB](N3) to [out=375,in=345,looseness=8] (N3);
			
			\draw[FillB](N4) to [out=60,in=30,looseness=8] (N4);
			\draw[FillR](N4) to [out=105,in=75,looseness=8] (N4);
			\draw[DashedB](N4) to [out=150,in=120,looseness=8] (N4);
			\draw[DashedR] (N4) to [out=195,in=165,looseness=8] (N4);
			
			\end{tikzpicture}
\end{minipage}
\begin{minipage}{7cm}
Outcome $o$ is a Nash Equilibrium, and thus a weak Nash Equilibrium: 
\begin{itemize}
\item[-] $o R_1 p$ and $p \preceq_1 o$, 

\item[-] $o R_1 q$ and $q \preceq_1 o$, 

\item[-] $o R_2 q$ and $q \preceq_2 o$.
\end{itemize}

In turn, $p$ is a weak Nash Equilibrium but {\em not} a (strict) Nash Equilibrium:
\begin{itemize}
\item[-] $p R_1 o$ and $o \preceq_1 p$,
\item[-] $p R_2 q$ while $p \not\preceq_2 q$ and $q \not\preceq_2 p$.
\end{itemize}
\end{minipage}
\end{example}

While game morphisms typically do not preserve equilibria, it is possible to identify equilibria in products, coproducts, and exponents by building upon the equilibria of the original games.

\begin{theorem}
    Given games $G$ and $G'$ in \gami, with $I\neq \emptyset$, then
    \begin{enumerate}
        \item $o^*\in\phi^{NE}(G)$ and $o'^*\in\phi^{NE}(G')$ if and only if $\pair{o^*,o'^*}\in\phi^{NE}(G\times_I G')$.
        \item  $o^*\in\phi^{NE}(G)$   if and only if $i_G(o^*)\in \phi^{NE}(G+_I G')$. A similar result applies to equilibria in  $\phi^{NE}(G')$ 
        \item $o'^*\in\phi^{NE}(G')$ if and only if $\bm{o}'^{\bm{*}}\in \phi^{NE}(G'^G)$, where $\bm{o}'^{\bm{*}}$ is the constant function with value $o'^*$.
    \end{enumerate}
\end{theorem}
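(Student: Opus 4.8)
The plan is to prove all three equivalences by unwinding the definitions of the accessibility and preference relations on products, coproducts and exponentials, the main structural ingredient being reflexivity of the accessibility relations $R_i$ (plus, for part~3, reflexivity of $R'_i$ and $\preceq'_i$). For part~1 I would argue as follows: in the forward direction a deviation $\pair{p,p'}$ with $\pair{o^*,o'^*}\,R^{\times_I}_i\,\pair{p,p'}$ splits, by definition of $R^{\times_I}_i$, into $o^*R_ip$ and $o'^*R'_ip'$; applying the Nash condition separately in $G$ and in $G'$ gives $p\preceq_io^*$ and $p'\preceq'_io'^*$, hence $\pair{p,p'}\preceq^{\times_I}_i\pair{o^*,o'^*}$. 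For the converse, given a deviation $o^*R_ip$ in $G$, reflexivity of $R'_i$ lets me form the product deviation $\pair{o^*,o'^*}\,R^{\times_I}_i\,\pair{p,o'^*}$, and reading off the first coordinate of $\pair{p,o'^*}\preceq^{\times_I}_i\pair{o^*,o'^*}$ yields $p\preceq_io^*$; the argument for $o'^*$ is symmetric.

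For part~2 the observation I would lean on is that the two summands of $G+_IG'$ are mutually disconnected: if $i_G(o^*)\,(R_i+R'_i)\,y$ then $y$ lies in the $O$-copy and $o^*R_iy$ holds there, and likewise for $\preceq_i+\preceq'_i$. Thus the Nash condition for $i_G(o^*)$ inside $G+_IG'$ transcribes verbatim to the Nash condition for $o^*$ inside $G$, the inclusion being an injective relation-reflecting map between the $O$-parts, so both implications are immediate; the statement about $G'$ is handled identically.

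Part~3 is where the substantive work lies. First I would check that $\bm{o}'^{\bm{*}}$ is a legitimate outcome of $G'^G$, i.e.\ a morphism $G\to G'$, by the constant-function argument already used for Proposition~\ref{injectionExp}. For ``$\Rightarrow$'', assuming $o'^*\in\phi^{NE}(G')$ and taking $g\in\mathcal{O}$ with $\bm{o}'^{\bm{*}}\,R''_i\,g$, I would instantiate the defining condition of $R''_i$ at $o=p$ --- permissible since $R_i$ is reflexive --- to get $o'^*R'_ig(p)$ for every $p\in O$, whence the Nash property of $o'^*$ gives $g(p)\preceq'_io'^*$ for all $p$, that is $g\preceq''_i\bm{o}'^{\bm{*}}$; so $\bm{o}'^{\bm{*}}\in\phi^{NE}(G'^G)$. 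For ``$\Leftarrow$'', assuming $\bm{o}'^{\bm{*}}\in\phi^{NE}(G'^G)$ and taking $q\in O'$ with $o'^*R'_iq$, the constant morphism $\bm{q}$ (again an outcome of $G'^G$) satisfies $\bm{o}'^{\bm{*}}\,R''_i\,\bm{q}$ directly from $o'^*R'_iq$, so the Nash property of $\bm{o}'^{\bm{*}}$ forces $\bm{q}\preceq''_i\bm{o}'^{\bm{*}}$, i.e.\ $q\preceq'_io'^*$ after evaluating at any point of $O$; hence $o'^*\in\phi^{NE}(G')$.

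The hard part, and the step I expect to be the main obstacle, is exactly this passage in part~3 between a property of the single outcome $o'^*$ and the corresponding \emph{pointwise} property of the constant function: ``$\Rightarrow$'' hinges on collapsing the two-variable condition in $R''_i$ by setting $o=p$, while ``$\Leftarrow$'' hinges on evaluating a pointwise preference at an actual outcome, which requires $O\neq\emptyset$. For this reason I would phrase the hypothesis of part~3 as $G\neq\bm{\emptyset}$ rather than merely $I\neq\emptyset$: if $G=\bm{\emptyset}$ then $G'^G$ has a single outcome, which is vacuously a Nash equilibrium, so the equivalence can fail. Parts~1 and~2, by contrast, need no restriction on $I$.
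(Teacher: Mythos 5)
Your argument is correct and is essentially the paper's own proof: parts 1 and 2 unwind the product and coproduct relations (reflexivity of $R'_i$ supplying the deviation $\pair{p,o'^*}$ in the converse of part 1, disjointness of the summands making part 2 a verbatim transcription), and part 3 works with the constant-function outcomes of $G'^G$ and reflexivity of $R_i$ exactly as the paper does. Your closing remark about the hypothesis is also well taken: in the converse of part 3 the paper infers $p'\preceq'_i o'^*$ from ``for all $o\in O$, $\bm{p'}(o)\preceq'_i\bm{o}'^{\bm{*}}(o)$'', which silently requires $O\neq\emptyset$, and the stated assumption $I\neq\emptyset$ does not guarantee this (if $O=\emptyset$ the unique outcome of $G'^G$ is vacuously an equilibrium even when $o'^*$ is not one in $G'$); so replacing the hypothesis by $G\neq\bm{\emptyset}$ for part 3, with parts 1 and 2 needing no nonemptiness assumption at all, is the accurate formulation.
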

\begin{proof}
    \begin{enumerate}
        \item Assume that $o^*\in\phi^{NE}(G)$,   $o'^*\in\phi^{NE}(G')$ and that for some $i\in I$, $\pair{o^*,o'^*}R^{\times_I}_i\pair{p,p'}$. Then, by the definition of   $R^{\times_I}_i$, $o^*R_ip$ and $o'^*R'_ip'$. Since $o^*$ and $o'^*$ are equilibria, it turns out that $p\preceq_i o^*$ and $p'\preceq_i o'^*$, from where $\pair{p,p'}\preceq_i^{\times_I}\pair{o^*,o'^*}$. 
        
        To prove the converse implication, consider $\pair{o^*,o'^*}\in\phi^{NE}(G\times_I G')$ and that for some $i\in I$, $o^*R_ip$. Then, since $R'_i$ is reflexive,  $o'^*R'_io'^*$ so $\pair{o^*,o'^*}R^{\times_I}_i\pair{p,o'^*} $, therefore   $p\preceq_i o^*$. The same argument proves that $o'^*\in\phi(G')$.
          \item Assume that $o^*\in\phi^{NE}(G)$.  If we  assume that for some $i\in I$,  $\pair{i_O({o^*}),x}\in R_i+R'_i$, then we must have  $x=i_O(p)$ for some $p\in O$ and  ${o^*}R_ip$. By the hypothesis, $p\preceq_i o^*$ and thus $o^*\in \phi^{NE}(G+_IG')$. 
  
  If now  we assume that $i_O(o^*)\in\phi^{NE}(G+_IG')$ and for some $i\in I$ and $p\in O$, $o^*R_ip$, then $\pair{i_O(o^*),i_O(p)}\in R_i+R'_i$, so by hypothesis   $i_O(p)(\preceq_i+\preceq'_i)i_O(o^*)$. It follows that $p\preceq_io^*$, so $o^*\in\phi^{NE}(G)$.
  
    \item Assume that $o'^*\in\phi^{NE}(G')$. Consider for a fixed $i\in I$,  $f\in \mathcal{O}$ such that $\pair{f,\bm{o}'^{\bm{*}}}\in R''_i$. This means that for every $o,p\in O$, if $oR_ip$ then $f(o)R'_i\bm{o}'^{\bm{*}}(p)$. In particular, since $oR_io$ we have by hypothesis that $f(o)\preceq'_io'^*$. Thus $f\preceq''_i\bm{o}'^{\bm{*}}$.

For the converse, consider $o'^*\in O'$ such that $\bm{o}'^{\bm{*}}\in \phi^{NE}(G'^G)$. Then for any $i\in I, f\in \mathcal{O}$, if $fR''_i\bm{o}'^{\bm{*}}$ then $f\preceq''_i\bm{o}'^{\bm{*}}$. We want to show that $o'^*\in \phi^{NE}(G')$. For this suppose that  $p'\in O'$ is such that $p'R'_io'^*$. Therefore, for the constant function $\bm{p'}$, $\bm{p'}R''_i\bm{o}'^{\bm{*}}$ also holds, so by hypothesis, $\bm{p'}\preceq''_i\bm{o}'^{\bm{*}}$. This means that for all $o\in O$, $\bm{p'}(o)\preceq'_i\bm{o}'^{\bm{*}}(o)$, i.e. $p'\preceq'_io'^*$.
  
    \end{enumerate}
\end{proof}

A similar result holds in \gam, with a correspondingly different proof: 

\begin{theorem}~\label{propNE}
    Given games $G$ and $G'$ in \gam\ with non-empty sets of players, then
    \begin{enumerate}
        \item $o^*\in\phi^{NE}(G)$ and $o'^*\in\phi^{NE}(G')$ if and only if $\pair{o^*,o'^*}\in\phi^{NE}(G\times G')$.
        \item  $o^*\in\phi^{NE}(G)$   if and only if $i_G(o^*)\in \phi^{NE}(G+ G')$. Similarly for equilibria in  $\phi^{NE}(G')$. 
        \item $o'^*\in\phi^{NE}(G')$ if and only if $\bm{o}'^{\bm{*}}\in \phi^{NE}(G'^G)$, where $\bm{o}'^{\bm{*}}$ is the constant function with value $o'^*$.
    \end{enumerate}
\end{theorem}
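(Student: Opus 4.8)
\noindent\emph{Proof strategy.} The plan is to mirror the proof of the preceding theorem item by item, replacing each construction in \gami\ by its \gam\ counterpart (the products, coproducts and exponentials built in the proof of Theorem~\ref{complete} and in Proposition~\ref{expGam}) and keeping track of the player component $f_p$ of each morphism alongside the outcome component $f_O$. For item~(1), in the forward direction I would take a player $\pair{i,j}\in I\times J$ of $G\times G'$ with $\pair{o^*,o'^*}R_{\pair{i,j}}\pair{p,p'}$; by the definition of the product relation this means $o^*R_ip$ and $o'^*R'_jp'$, so the hypotheses $o^*\in\phi^{NE}(G)$ and $o'^*\in\phi^{NE}(G')$ give $p\preceq_io^*$ and $p'\preceq'_jo'^*$, hence $\pair{p,p'}\preceq_{\pair{i,j}}\pair{o^*,o'^*}$. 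Conversely, given $\pair{o^*,o'^*}\in\phi^{NE}(G\times G')$ and $o^*R_ip$, I would pick any $j\in J$ (using that $J\neq\emptyset$) and use reflexivity of $R'_j$ to get $\pair{o^*,o'^*}R_{\pair{i,j}}\pair{p,o'^*}$, whence $p\preceq_io^*$; the symmetric argument (using $I\neq\emptyset$) handles $o'^*$.

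For item~(2) the new ingredient is the padding by identity relations: recall $R^+_k=R_i\cup Id_{O'}$ and $\preceq^+_k=\preceq_i\cup Id_{O'}$ when $k$ comes from $I$, and dually when $k$ comes from $J$. Assuming $o^*\in\phi^{NE}(G)$ and $\pair{i_G(o^*),x}\in R^+_k$, I would split on the origin of $k$: if $k$ comes from $i\in I$, then since $i_G(o^*)$ lies in the $O$-part the pair must belong to $R_i$, so $x=i_G(p)$ with $o^*R_ip$, and the hypothesis gives $p\preceq_io^*$, i.e.\ $\pair{i_G(p),i_G(o^*)}\in\preceq^+_k$; if $k$ comes from $J$, the only pairs of $R'_j\cup Id_O$ starting in the $O$-part are diagonal, so $x=i_G(o^*)$ and we conclude by reflexivity of $\preceq^+_k$. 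The converse is a direct unravelling: $o^*R_ip$ forces $\pair{i_G(p),i_G(o^*)}\in\preceq^+_i=\preceq_i\cup Id_{O'}$, and since both coordinates lie in the $O$-part this pair is in $\preceq_i$, giving $p\preceq_io^*$.

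Item~(3) is where I expect the real work, because the players of $G'^G$ are themselves maps $\kappa\colon I\to J$ (those extendable to a morphism), so one must choose the right test players. I would first record that, by the argument in Proposition~\ref{embeddingGam}, for every $\kappa\colon I\to J$ and every $o'\in O'$ the pair $(\kappa,\bm{o'})$ is a morphism $G\to G'$; in particular $\bm{o}'^{\bm{*}}\in\mathcal{O}$ and every constant $\kappa$ lies in $K$. For the forward direction, fixing a player $\kappa\in K$ and $f\in\mathcal{O}$ with $\pair{f,\bm{o}'^{\bm{*}}}\in R_\kappa$, the case $f=\bm{o}'^{\bm{*}}$ is trivial by reflexivity of $\preceq_\kappa$; otherwise, taking $p=o$ (reflexivity of $R_i$) in the defining condition of $R_\kappa$ yields $f(o)R'_{\kappa(i)}o'^*$ for all $o\in O$ and $i\in I$, and since $\kappa(i)$ is a player of $G'$ and $o'^*$ is an equilibrium there (use symmetry of $R'_{\kappa(i)}$) we obtain $f(o)\preceq'_{\kappa(i)}o'^*$, i.e.\ $f\preceq_\kappa\bm{o}'^{\bm{*}}$. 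For the converse, from $p'R'_jo'^*$ I would take the constant map $\kappa_j\colon I\to J$ with value $j$ (which is in $K$), note $\bm{p'},\bm{o}'^{\bm{*}}\in\mathcal{O}$, check that $\pair{\bm{p'},\bm{o}'^{\bm{*}}}\in R_{\kappa_j}$ (its defining condition collapses to the hypothesis $p'R'_jo'^*$), and then apply $\phi^{NE}$-ness of $\bm{o}'^{\bm{*}}$ to get $\bm{p'}\preceq_{\kappa_j}\bm{o}'^{\bm{*}}$; evaluating at an outcome $o\in O$ (here one silently uses, exactly as in the \gami\ version, that $G$ has an outcome and $I\neq\emptyset$) recovers $p'\preceq'_jo'^*$, so $o'^*\in\phi^{NE}(G')$.

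The main obstacle is the bookkeeping in item~(3): verifying that the constant functions used as test players and test outcomes genuinely belong to $K$ and $\mathcal{O}$ — which is precisely the content of Proposition~\ref{embeddingGam} — and ensuring the final evaluation step is non-vacuous. Items~(1) and~(2) are routine once the $\gam$-specific definitions of the product and coproduct relations are substituted; the only subtlety there is remembering to invoke non-emptiness of the player sets and, in~(2), the case split on which summand a player of $G+G'$ belongs to.
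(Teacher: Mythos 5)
Your proposal is correct and follows essentially the same route as the paper's proof: the product and coproduct cases are verbatim the paper's argument (you are in fact slightly more careful with the identity-padding/diagonal cases in the coproduct and in $R_\kappa$, which the paper glosses), and your exponential case uses the same constant test players $\bm{j}$ and constant outcomes $\bm{p'}$ via Proposition~\ref{embeddingGam}, with the same implicit reliance on $O\neq\emptyset$ that the paper's own evaluation step makes.
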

\begin{proof}
    \begin{enumerate}
\item Assume that $o^*\in\phi^{NE}(G)$,   $o'^*\in\phi^{NE}(G')$. Thus for every  $i\in I$, $p\in O$, if  $o^*R_ip$, then $p\preceq_io^*$, and for all $j\in J$, $p'\in O'$, if  $o'^*R'_jp'$, then $p'\preceq'_jo'^*$. Now consider any pair $\pair{i,j}\in I\times J$ such that $\pair{o^*,o'^*}R_{\pair{i,j}}\pair{p,p'}$. Then, by the definition of   $R_{\pair{i,j}}$, $o^*R_ip$ 
 and $o'^*R'_jp'$. Since $o^*$ and $o'^*$ are equilibria, it turns out that $p\preceq_i o^*$ and  $p'\preceq'_j o'^*$, from where $\pair{p,p'}  \preceq_{\pair{i,j}}\pair{o^*,o'^*}$.
        
  To prove the converse implication, consider $\pair{o^*,o'^*}\in\phi^{NE}(G\times G')$ and that for some $i\in I$, $o^*R_ip$. Then, since for any $j\in J$, $R'_j$ is reflexive, $o'^*R'_jo'^*$ so  $\pair{o^*,o'^*}R_{\pair{i,j}}\pair{p,o'^*}$, so $\pair{p,o'^*}\preceq_{\pair{i,j}}\pair{o^*,o'^*}$ therefore   $p\preceq_i o^*$. The same argument proves that $o'^*\in\phi^{NE}(G')$.

  \item Assume that $o^*\in\phi^{NE}(G)$.  If we  assume that for some $k\in I+J$,  $i_O({o^*})R^+_kx$, then we must have $k\in I$, and $x=i_O(p)$ for some $p\in O$. Then  ${o^*}R_kp$. By the hypothesis, $p\preceq_k o^*$, so  $x\preceq^+_ki_O(o^*)$  and thus $o^*\in \phi^{NE}(G+G')$. 
  
  If now  we assume that $i_O(o^*)\in\phi^{NE}(G+G')$ and for some $i\in I$ and $p\in O$, $o^*R_ip$, then $i_O(o^*)R^+_ii_O(p)$, so by hypothesis
  $i_O(p)\preceq^+_ii_O(o^*)$. It follows that $p\preceq_io^*$, so $o^*\in\phi^{NE}(G)$.

  \item Assume that $\pair{f,\bm{o}'^{\bm{*}}}\in R_\kappa$, for some $\kappa:I\to J$. Then $(\kappa,f)$ and $(\kappa,\bm{o}'^{\bm{*}})$ are morphisms from $G$ to $G'$ in \gam, and for all $i\in I, o,p\in O$, if $oR_ip$, then $f(o)R'_{\kappa(i)}\bm{o}'^{\bm{*}}(p)$. By the hypothesis that $o'^*\in \phi^{NE}(G')$, it turns out that $f(o)\preceq'_{\kappa(i)}\bm{o}'^{\bm{*}}(p)$, so $f\preceq_\kappa\bm{o}'^{\bm{*}}$.

  If $\bm{o}'^{\bm{*}}$ is a Nash equilibrium in $G'^G$, and $p'R'_jo'^* $ for some $j\in J$, then consider the constant functions $\bm{p'}$ and $\bm{j}$. By an argument similar to that in Proposition \ref{embeddingGam}, we know that $(\bm{j},\bm{p}')$ and $(\bm{j},\bm{o}'^{\bm{*}})$ are morphisms.  Furthermore, $\bm{p}'R_{\bm{j}}\bm{o}'^{\bm{*}} $, so by hypothesis, $\bm{p}'\preceq_{\bm{j}}\bm{o}'^{\bm{*}}$. Therefore, for all $i\in I$ and $o\in O$, $\bm{p}'(o)\preceq'_{\bm{j}(i)}\bm{o}'^{\bm{*}(o)}$, proving that $p'\preceq'_jo'^*$ as required. \qedhere
    \end{enumerate}
\end{proof}

A similar result establishing the preservation of weak Nash equilibria under products, coproducts, and exponentials can be proved. However, since its proof follows analogous arguments and introduces no fundamentally new ideas, we omit it for the sake of brevity.

Furthermore, we can also find the Nash equilibria in coproducts of games in which we identify some subset of players:

\begin{proposition}\label{propPushout}
    Given games $G = \langle I,O,\{R_{i}\}_{i \in I}, \{\preceq _{i}\}_{i \in I}\rangle$ and $G' = \langle J, O', \{R'_j\}_{j \in J}, \{\preceq '_j\}_{j \in J} \rangle$, such that  $I\cap J=S\neq\emptyset$, let $G+G'/_{\sim_S}$ be the pushout  based on $G_p(S)$ as in Example \ref{examplePushout}. Then $o^*\in\phi^{NE}(G)$ if and only if $i_G(o^*)\in\phi^{NE}(G+G'/_{\sim_S})$.
\end{proposition}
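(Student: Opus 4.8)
The plan is to make the pushout fully explicit, using Example~\ref{examplePushout}, and then run essentially the argument of part~(2) of Theorem~\ref{propNE}, with a little extra bookkeeping for the players in $S$ that get identified. First I would record the structure of $H:=G+G'/_{\sim_S}$: its outcomes form the disjoint union $O+O'$ (the maps out of $G_p(S)$ impose no relation on outcomes, so no identification of outcomes takes place), the canonical morphism $i_G\colon G\to H$ sends $o\in O$ to its copy $\pair{o,0}\in O+O'$, and its players can be identified with $I\cup J$, where, viewing each relation as a subset of $(O+O')^2$,
\[
R^H_k=\begin{cases} R_k\cup Id_{O'} & k\in I\setminus S\\ Id_O\cup R'_k & k\in J\setminus S\\ R_k\cup R'_k & k\in S,\end{cases}
\qquad
\preceq^H_k=\begin{cases} \preceq_k\cup Id_{O'} & k\in I\setminus S\\ Id_O\cup \preceq'_k & k\in J\setminus S\\ \preceq_k\cup \preceq'_k & k\in S.\end{cases}
\]
Here I would note that for $k\in S$ no transitive closure is actually added by the coequalizer construction: since $\preceq_k\subseteq O^2$ and $\preceq'_k\subseteq O'^2$ live on disjoint parts of $O+O'$, the union $\preceq_k\cup\preceq'_k$ is already reflexive and transitive. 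The facts I want to extract are: for \emph{every} $i\in I$ (whether or not $i\in S$) the restrictions of $R^H_i$ and $\preceq^H_i$ to the copy of $O$ inside $O+O'$ are exactly $R_i$ and $\preceq_i$; and from an outcome $\pair{o^*,0}$ the only $R^H_k$-accessible outcomes are $\pair{o^*,0}$ itself when $k\in J\setminus S$, and outcomes $\pair{p,0}$ with $o^*R_kp$ when $k\in I$.

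For the forward implication, assume $o^*\in\phi^{NE}(G)$, fix a player $k$ of $H$ and an outcome $x$ with $\pair{o^*,0}\,R^H_k\,x$. If $k\in J\setminus S$ then $x=\pair{o^*,0}$ and reflexivity of $\preceq^H_k$ finishes this case; if $k\in I$ then $x=\pair{p,0}$ with $o^*R_kp$, so $p\preceq_k o^*$ by the equilibrium hypothesis in $G$, whence $\pair{p,0}\preceq^H_k\pair{o^*,0}$ because $\preceq^H_k$ contains $\preceq_k$. Thus $i_G(o^*)=\pair{o^*,0}\in\phi^{NE}(H)$.

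For the converse, assume $\pair{o^*,0}\in\phi^{NE}(H)$, take $i\in I$ and $p\in O$ with $o^*R_ip$, and aim for $p\preceq_io^*$. Since $i$ is a player of $H$ and $R^H_i\supseteq R_i$, we have $\pair{o^*,0}\,R^H_i\,\pair{p,0}$, so the equilibrium property in $H$ gives $\pair{p,0}\preceq^H_i\pair{o^*,0}$; as both outcomes lie in the copy of $O$ and $\preceq^H_i$ agrees with $\preceq_i$ there, we conclude $p\preceq_io^*$, so $o^*\in\phi^{NE}(G)$. The only step requiring care — the mild ``main obstacle'' — is the observation underlying the first paragraph: amalgamating the players of $S$ adds to their relations only the edges $R'_s$ and $\preceq'_s$, which live entirely in the $O'$-copy of $O+O'$, so they neither create new deviations from an outcome of the $O$-copy nor introduce (via the transitive closure in the coequalizer) any new preference links among outcomes of $G$. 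Everything else is a direct transcription of the coproduct case in Theorem~\ref{propNE}(2).
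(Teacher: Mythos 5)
Your proof is correct and follows essentially the same route as the paper's: describe the pushout explicitly (singleton outcome classes, players $I\cup J$ with merged relations for $s\in S$) and then run the coproduct argument of Theorem~\ref{propNE}(2) with a case split on whether the player lies in $I$, in $S$, or in $J\setminus S$. If anything, you are slightly more explicit than the paper in the forward direction (handling players in $J\setminus S$ via reflexivity and noting that no transitive closure arises because $\preceq_s$ and $\preceq'_s$ have disjoint supports), but this is the same argument.
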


\begin{proof} Because of the way we defined the pushout, the equivalence classes of outcomes are singletons and therefore we just identify them with their single element.  Assume that $o^*\in\phi^{NE}(G)$ and $i_G(o^*)R^+_{[i]}p$ for some $i\in I+J$. This means that there exists some $j\in[i]$ such that $i_G(o^*)R^+_{j}p$. It follows that $p$ must be the inclusion of some outcome $p'\in O$ and thus $i_G(o^*)R^+_{j}i_G(p')$, so $o^*R_jp'$. Then $p'\preceq_j o^*$ and therefore $p\preceq^+_j i_G(o^*)$. In other words $p\preceq^+_{[i]} i_G(o^*)$.

For the converse, assume that $o^*R_kp$ for some $k\in I$. Then  $i_G(o^*)R^+_{[k]}i_G(p)$. Since by hypothesis $i_G(o^*)\in\phi^{NE}(G+G'/_{\sim_S})$, we have two cases: 
\begin{itemize}
\item if $k\in I\setminus S$, $[k]=\set{\pair{k,0}}$ and thus $i_G(o^*)R^+_{\pair{k,0}}i_G(p)$ so $i_G(p)\preceq^+_{\pair{k,0}}i_G(o^*)$ and therefore $p\preceq_{k}o^*$.
\item if $k\in S$, $[k]=\set{\pair{k,0},\pair{k,1}}$ but  since $i_G(o^*), i_G(p)\in i_G(O)$ this means that $i_G(o^*)R^+_{\pair{k,0}}i_G(p)$ so $p\preceq_{k}o^*$ as before.
\end{itemize}
\end{proof}

\section{Categories with equilibria-preserving morphisms} \label{sectNash}

We can regard $\phi^{NE}$ as an operator taking games to games, but in general, it is not a functor in \gam. To see this, we observe that even though game morphisms preserve the accessibility relations and preferences of the players, this is not enough to make sure that Nash equilibria will be preserved:

\begin{example}~\label{exi}
Consider two simple two-player games:  $G$ with $O=\set{a,b}$ and $R_1=R_2=\preceq_1=\preceq_2=1_O$ and $G'$ with $O'=O$, $R'_1=R'_2=Id_{O}\cup\set{\pair{a,b}, \pair{b,a}}$, $\preceq'_1=Id_{O}\cup\set{\pair{a,b}}$, and $ \preceq'_2=Id_{O}\cup\set{\pair{b,a}}$. The identity function on $O$ gives a morphism. Both outcomes are Nash equilibria in $G$ but there is no equilibrium in $G'$. 
\end{example}

We see in the example above that the identity function $Id_O$ gives a morphism from $G\to G'$, but since there are no outcomes in $\phi^{NE}(G')$, there is no way to pick a morphism from $\phi^{NE}(G)$ to $\phi^{NE}(G')$ to fulfill the role of $\phi^{NE}(Id_O)$.

 Notice that this negative result applies to both \gam\ and $\mathbf{Gam}_{I}$. A way of addressing this issue is by taking hints from \cite{Lapitsky1999} and \cite{ghani2018compositional} in which the idea of preservation of equilibria is built in the definition of morphism. In this spirit, we look at some subcategories of  $\mathbf{Gam}$ with equilibria-preserving morphisms.
 
 \begin{definition} Let \textbf{NE} be the category of games in which all the outcomes are Nash equilibria. This is a full subcategory of \gam, meaning that all the morphisms in \gam\ between two objects in \textbf{NE} are also morphisms in \textbf{NE}.
 \end{definition}

 Notice that if $G$ is a game in \textbf{NE}, $\phi^{NE}({G}) = {G}$. That is, objects in \textbf{NE} are fixed points under $\phi^{NE}$.

\begin{definition}
    We say that a morphism $f:G\to G'$ in \gam\  \emph{preserves Nash equilibria} if $f_O(\phi^{NE}(G))\subseteq \phi^{NE}(G')$.
\end{definition}

All the morphisms in \textbf{NE} trivially preserve Nash equilibria. Motivated by this, we can also define another  subcategory of \gam\  with equilibria preserving morphisms:

 \begin{definition} Let \gam$^{NE}$ be the subcategory of \gam\ with all the same objects, but only the equilibria-preserving morphisms.
 \end{definition}

It can be easily checked that \gam$^{NE}$\  is a category since identities and compositions of equilibria-preserving morphisms are equilibria-preserving. Furthermore, \textbf{NE} is a subcategory of \gam$^{NE}$ since as pointed out before, all its morphisms trivially preserve equilibria.

As a consequence of Theorem \ref{propNE} and Proposition \ref{propPushout} we have:

\begin{corollary}
    Products, coproducts and pushouts based on player games can be constructed in \gam$^{NE}$.
\end{corollary}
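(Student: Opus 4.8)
The plan is to show that the same objects that serve as binary products, binary coproducts, and player-game pushouts in \gam\ (as constructed in the proof of Theorem~\ref{complete}) continue to have those universal properties inside the subcategory \gam$^{NE}$. Since \gam$^{NE}$ has the same objects as \gam\ but fewer morphisms, for each construction I must verify two things: first, that the structural morphisms --- the projections, the injections, and the pushout coprojections --- are equilibria-preserving, hence belong to \gam$^{NE}$; and second, that whenever the competing cone or cocone is made of equilibria-preserving morphisms, the mediating morphism furnished by Theorem~\ref{complete} is itself equilibria-preserving. Uniqueness of this mediating morphism within \gam$^{NE}$ is then automatic: any \gam$^{NE}$-morphism filling the relevant diagram is a fortiori a \gam-morphism, hence equals the unique one.

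For binary products I would invoke Theorem~\ref{propNE}(1) in both directions. The ``only if'' direction gives that $\pair{o^*,o'^*}\in\phi^{NE}(G\times G')$ implies $o^*\in\phi^{NE}(G)$ and $o'^*\in\phi^{NE}(G')$, so the projections $\pi_1,\pi_2$ preserve equilibria. The ``if'' direction handles the mediating morphism: given equilibria-preserving $f:X\to G$ and $g:X\to G'$ and $x^*\in\phi^{NE}(X)$, one has $f_O(x^*)\in\phi^{NE}(G)$ and $g_O(x^*)\in\phi^{NE}(G')$, so the mediating $u$ with $u_O(x^*)=\pair{f_O(x^*),g_O(x^*)}$ sends $x^*$ into $\phi^{NE}(G\times G')$. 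For binary coproducts the roles of the two directions are swapped: the ``if'' direction of Theorem~\ref{propNE}(2) makes the injections $i_G,i_{G'}$ equilibria-preserving, while the ``only if'' direction, together with the fact that every outcome of $G+G'$ is $i_G(o)$ or $i_{G'}(o')$ for a unique summand, shows that every element of $\phi^{NE}(G+G')$ is the injection of an equilibrium of $G$ or of $G'$; applying the equilibria-preserving $f$ or $g$ to it then shows $[f,g]$ preserves equilibria.

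Pushouts based on player games go through in exactly the same way, with Proposition~\ref{propPushout} (and its mirror version for $G'$) in place of Theorem~\ref{propNE}(2). One first notes that the span legs $f:G_p(S)\to G$ and $f':G_p(S)\to G'$ of Proposition~\ref{playerGame} are equilibria-preserving for the trivial reason that $\phi^{NE}(G_p(S))=\emptyset$; the same biconditional then makes the coprojections into \gam$^{NE}$-morphisms, and --- using that for player-game data the pushout leaves the outcome set equal to the disjoint union $O+O'$ --- forces every equilibrium of $G+G'/_{\sim_S}$ to be the coprojection of an equilibrium of $G$ or of $G'$. Composing with an equilibria-preserving cocone then yields an equilibria-preserving mediating morphism.

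The step I expect to matter most is the use of the ``only if'' halves of Theorem~\ref{propNE} and Proposition~\ref{propPushout}: these are what guarantee that the equilibria of the product, coproduct, or pushout are exactly those assembled from equilibria of the components, which is precisely what lets me control the image of the mediating morphism under $\phi^{NE}$. Beyond that, nothing genuinely new is needed --- the remaining verifications merely unwind the definition of ``preserves Nash equilibria'' against the universal properties already established in \gam. If one also wants the set-indexed products and coproducts in \gam$^{NE}$, the same argument works verbatim once the evident generalization of Theorem~\ref{propNE}(1)--(2) to set-indexed families is recorded.
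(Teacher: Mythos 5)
Your proposal is correct and takes essentially the same route as the paper, which states the corollary as a direct consequence of Theorem~\ref{propNE} and Proposition~\ref{propPushout}; you simply spell out the implicit verification that the projections, injections, pushout coprojections, and mediating morphisms are equilibria-preserving, using the two directions of those biconditionals exactly as intended. (A minor quibble: your ``if''/``only if'' labels are swapped relative to the standard reading of the biconditionals, but the implications you actually invoke in each step are the correct ones.)
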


A sufficient requirement ensuring that a morphism preserves Nash equilibrium is as follows:

\begin{lemma}~\label{Nash}
Let $f:G\to G'$ in $\mathbf{Gam}$ be such that $f_p: I \rightarrow J$ and  $f_O: O\to O'$ are {\em surjective} and for each $i \in I$ and every pair $o, p \in O$:
			\[  oR_ip \text{ iff } f_O(o)R'_{f_p(i)} f_O(p).\]
  Then, if $o^*$ is a Nash equilibrium in $G$,  $f_O(o^*)$ is a Nash equilibrium in $G'$. That is, $f$ is a morphism in \gam$^{NE}$.
\end{lemma}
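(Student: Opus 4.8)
The plan is to take an arbitrary Nash equilibrium $o^*$ in $G$ and show directly that $f_O(o^*)$ satisfies the defining condition of a Nash equilibrium in $G'$: for every player $j \in J$ and every outcome $q' \in O'$ with $f_O(o^*)R'_j q'$, we must have $q' \preceq'_j f_O(o^*)$. So I would fix such a $j$ and such a $q'$ and aim to pull the situation back along $f$ into the game $G$, use the equilibrium property there, and push the conclusion forward.

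The first step uses surjectivity of $f_p$: pick $i \in I$ with $f_p(i) = j$. The second step uses surjectivity of $f_O$: pick $p \in O$ with $f_O(p) = q'$. Now the hypothesis $f_O(o^*)R'_j q'$ reads $f_O(o^*)R'_{f_p(i)} f_O(p)$, and here the \emph{biconditional} in the hypothesis is crucial — the ``only if'' direction that ordinary morphisms lack — so we may conclude $o^* R_i p$ in $G$. Since $o^*$ is a Nash equilibrium in $G$, this gives $p \preceq_i o^*$. The final step applies the fact that $f$ is a morphism in \gam, so it preserves preferences: from $p \preceq_i o^*$ we get $f_O(p) \preceq'_{f_p(i)} f_O(o^*)$, that is, $q' \preceq'_j f_O(o^*)$, which is exactly what we needed. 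Since $j$ and $q'$ were arbitrary, $f_O(o^*)$ is a Nash equilibrium in $G'$, i.e. $f_O(\phi^{NE}(G)) \subseteq \phi^{NE}(G')$, so $f$ is a morphism in \gam$^{NE}$.

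I do not expect a serious obstacle here; the argument is a short chain of implications. The one point requiring care is that the hypothesis supplies a \emph{biconditional} on the accessibility relations rather than the one-way preservation built into a \gam-morphism, and it is precisely the reverse implication, combined with the two surjectivity assumptions, that lets us reflect an arbitrary ``deviation'' $q'$ available to player $j$ at $f_O(o^*)$ back to a genuine deviation $p$ available to player $i$ at $o^*$. If either surjectivity hypothesis were dropped, the pullback step would fail (we could not realize an arbitrary $j$ or an arbitrary target $q'$), and if the biconditional were weakened to the usual one-way condition, we could not deduce $o^* R_i p$ from $f_O(o^*)R'_j f_O(p)$. So the ``hard part'', such as it is, is simply recognizing which of the three hypotheses does which job; once that is clear the verification is immediate.
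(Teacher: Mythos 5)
Your proof is correct and is essentially the paper's argument: surjectivity of $f_p$ and $f_O$ lets you pull an arbitrary deviation in $G'$ back to $G$, the reverse direction of the biconditional gives $o^*R_ip$, the equilibrium property gives $p\preceq_i o^*$, and preservation of preferences pushes this forward. The paper merely phrases the same chain as a proof by contradiction, so the difference is only presentational.
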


 \begin{proof}  Assume that $f_O(o^*)$ is not a Nash equilibrium. Then, there exist $j\in J$ and $o'\in O'$ such that \[ f_O(o^*)R'_{j} o',\ \text{ while } \ o'\not\preceq'_{j} f_O(o^*).\]
Since $f_p$ and $f_O$ are surjective, there exists $i \in I$ and $o\in O$ such that $f_p(i) = j$ and $f_O(o)=o'$, respectively. Then, we have that $f_O(o^*)R'_{f_p(i)} f_O(o)$  and $f_O(o)\not\preceq'_{f_p(i)} f_O(o^*)$. By the assumptions on $f$, we get that $o^*R_io$ and since $o^*$ is a Nash equilibrium, $o\preceq_i o^*$, so $f_O(o)\preceq'_{f_p(i)} f_O(o^*)$, which is a contradiction.
\end{proof}

\noindent {\bf Example}: {Consider the games of examples~\ref{PD} and \ref{BoS}, taking the players of game ${PD}$ to be $I_{PD} =\{1,2\}$ while those of $G_{BoS}$ are, $I_{BoS}=\{2,3\}$.  That is, $2$ plays both games. The pushout game as defined in Example \ref{examplePushout}, can be denoted $G_{PD} +_2 G_{BoS}$.}

By the result in Theorem \ref{propNE}, 2, the Nash equilibria of $G_{PD}$ and  $G_{BoS}$, namely $(D,D), r$ and $s$ are included in the coproduct. Since in the construction of the pushout the equivalence classes of the outcomes are all singletons, those are the equilibria of  $G_{PD} +_2 G_{BoS}$.

	\bibliographystyle{alpha}

\end{document}